\newtheorem{theorem}{Theorem}
\theoremstyle{plain}
\newtheorem{definition}{Definition}
\newtheorem{example}{Example}
\newtheorem{lemma}{Lemma}
\newtheorem{proposition}{Proposition}
\newtheorem{remark}{Remark}
\numberwithin{equation}{section}
\author{Samson Saneblidze}
\title{The bitwisted Cartesian   model for the free loop fibration}
\date{}
\dedicatory{To Murray  Gerstenhaber  and  Jim Stasheff}
\subjclass[2000]{Primary  55P35, 55U05, 52B05, 05A18, 05A19 ; Secondary 55P10}
\keywords{Cartier complex, Hochschild complex, freehedra, truncating twisting function,
twisted Cartesian product, homotopy G-algebra}
\address{A. Razmadze Mathematical
Institute\\
Department of Geometry and Topology\\
M. Alexidze st., 1 \\
 0193, Tbilisi,  Georgia}
\email{SANE@@rmi.acnet.ge}
\begin{document}

\begin{abstract}
 Using the notion of
truncating twisting function from a simplicial set to a cubical set a special,
bitwisted, Cartesian product of these sets is defined.  For the universal truncating
twisting function, the (co)chain complex of the corresponding bitwisted Cartesian
product agrees with the standard Cartier (Hochschild) chain complex of the simplicial
(co)chains. The modelling polytopes $F_n$ are constructed.  An explicit diagonal on
$F_n$ is defined and a multiplicative model for the free loop fibration $\Omega
Y\rightarrow \Lambda Y\rightarrow Y$ is obtained. As an application we establish an
algebra isomorphism $H^*(\Lambda Y;\mathbb{Z}) \approx S(U)\otimes \Lambda(
s^{_{-\!1}}\!U)$ for the polynomial cohomology algebra $H^*(Y;\mathbb{Z})=S(U).$

\end{abstract}
\maketitle

\section{Introduction}

Let $\Lambda Y$ denote the free loop space of a topological space $Y,$ i.e. the space
of  all continuous maps from the circle $S^1$ into $Y,$ and let $\Omega Y \rightarrow
\Lambda Y\overset{\xi}{\rightarrow} Y$ be the free loop fibration. Since $\xi$ can be
viewed as obtained from the path fibration $\Omega Y \rightarrow P
Y\overset{}{\rightarrow} Y$ by means of the conjugation action $ad:\Omega Y \times
\Omega Y\rightarrow \Omega Y $ \cite{McCleary}, one could apply  \cite{KScubi} to
construct for $\xi$ the twisted tensor product $C_{\ast}(Y)\otimes_{\tau_{\ast}} \Omega
C^{\ast}(Y).$ However, the induced action $ad_{\ast}:\Omega C_{\ast}(Y)\otimes \Omega
C_{\ast}(Y) \rightarrow \Omega C_{\ast}(Y) $ is hardly to write down by explicit
formulas; an alternative way for modelling $\Lambda Y$ is the Cartier chain complex
$\Lambda C_{\ast}(Y)$ of the singular simplicial chain coalgebra $C_{\ast}(Y)$ thought
of as a specific, \emph{bitwisted},  tensor product $\Lambda
C_{\ast}(Y)=C_{\ast}(Y)\,\,_{\tau_{\ast}}\!\!\!\otimes_{\tau_{\ast}} \Omega
C_{\ast}(Y)$ \cite{Cartier}, \cite{McCleary}. Accordingly, for a twisting truncating
function $\tau\! : \!X \rightarrow Q$ from a 1-reduced simplicial set to a monoidal
cubical set $Q,$
 we modify the twisted
Cartesian product $X \times _{\tau}Q $ from \cite{KScubi} to obtain a \emph{bitwisted
Cartesian product} $X\,_{\tau}\!\!\times_{\tau} Q $
 such that $\Lambda C_{\ast}(X)=
C^{\odot}_{\ast}(X \,_{\tau}\!\!\times_{\tau} Q )$ whenever $Q=\mathbf{\Omega} X,$ a
monoidal cubical set,  and $\tau=\tau_{_{U}} \! : \! X\rightarrow \mathbf{\Omega} X,$
the universal truncating twisting function, constructed in \cite{KScubi}.
 Dually, for the Hochschild complex
$\Lambda C^{\ast}(X)$ of the simplicial cochain algebra $ C^{\ast}(X)$ we get the
inclusion of cochain complexes $\Lambda C^{\ast}(X)\subset C^{\ast}_{\odot}(X
\,_{\tau}\!\!\times_{\tau} \mathbf{\Omega} X)$ (here we have equality when the graded
sets have finite type). The required model for $\Lambda Y$ is obtained by taking
$X=\operatorname{Sing}^1 Y,$ the Eilenberg 1-subcomplex of the singular simplicial set
$\operatorname{Sing} Y.$

We  construct polytopes $F_n$ referred to as \emph{freehedra} and introduce   the
notion of an $F_n$-\emph{set}. The  motivation is that the bitwisted Cartesian product
$X\,_{\tau}\!\!\times_{\tau} \mathbf{\Omega} X$ above admits  such a
 combinatorial structure in a canonical way. As the standard simplices $\Delta
^n,\ n\geq 0,$ are the modelling polytopes for a simplicial set,  the Cartesian
products $F_i\times I^j,\ i,j\geq 0,$ with $I^j$ the standard $j$-cube  serve as
modelling polytopes for an $F_n$-set. A universal example for an $F_n$-set is  the
\emph{singular}  complex $\operatorname{Sing}^F Y,$
  obtained as the set of continuous maps    $I^k\times F_m\times I^{\ell}
  \rightarrow Y$ for $ k,\ell,m\geq 0.$
  The normalized (co)chain complex $C_{\ast}^{\odot}(Y)\,(C^{\ast}_{\odot}(Y))$ of
   $\operatorname{Sing} ^F Y$
   also gives
 the singular (co)homology
of $Y.$ In particular, we  construct a map of
  $F_n$-sets
\begin{equation*}
   \Upsilon: \operatorname{Sing}^{1} Y\,_{\tau}\!\!\times_{\tau} {\bf \Omega}
\operatorname{Sing}^{1} Y\rightarrow \operatorname{Sing}^{F}\Lambda Y
\end{equation*}
 that extends the
cubical map $\omega: \mathbf{\Omega}  \operatorname{Sing}^{1}Y\rightarrow
\operatorname{Sing}^I \Omega Y$ realizing Adams' cobar equivalence $\omega_*:\Omega
C_*(Y)\rightarrow C^{\Box}_*(\Omega Y)$ \cite{Adams}:
 The image
$\Upsilon(\sigma_m,\sigma'_n)$ for $m,n>1$ consists of singular $(F_m\times
I^n)$-polytopes that are purely determined by a choice of  measuring homotopy for the
\emph{commutativity} of the standard loop product of $\Omega Y$ by the inclusion
$\Omega Y\subset \Lambda Y.$ Such a relationship is possible since $F_m$ admits a
representation as an explicit subdivision of the cube $I^m=I^{m-1}\times I.$

We construct an explicit diagonal $\Delta_F$
 for  $F_n.$
In a standard way  this diagonal  together with the Serre diagonal of the cubes  yields
the diagonal of an arbitrary $F_n$-set. Consequently, the $F_n$-set structure of the
bitwisted Cartesian product $\operatorname{Sing}^{1} Y\,_{\tau}\!\!\times_{\tau} {\bf
\Omega} \operatorname{Sing}^{1} Y, $ as a by-product, determines a comultiplication on
the Cartier complex $\Lambda C_{\ast}(Y).$ Dually, we obtain a multiplication on the
Hochschild complex $\Lambda C^{\ast}(Y).$ More precisely, a combinatorial analysis of
$\Delta_F$ shows that
 the multiplication on the
Hochschild complex $\Lambda C^{\ast}(Y)$ involves
 a canonical homotopy $G$-algebra (hga) structure
on the simplicial cochain algebra   $C^{\ast}(Y)$  \cite{Baues3}, \cite{Gerst-Voron},
\cite{Getz-Jones} (compare \cite{KScubi}).
 Thus, applying the cohomology functor for
$\Upsilon$ we obtain a natural algebra isomorphism
\begin{equation*}
\Upsilon^{\ast}: HH_{\ast}(C^{\ast}(Y;\Bbbk))\overset{\approx}{\longleftarrow}
H^{\ast}(\Lambda Y;\Bbbk)
\end{equation*}
 in which on the left-hand side the product is given by formula
(\ref{formula}) and $\Bbbk$ is a commutative ring with identity. Note also that
 the multiplication on the Hochschild complex $\Lambda C^{\ast}(Y)$ is not
associative, but it can be extended to an $A_{\infty}$-algebra structure
\cite{Stasheff}.

    As an application of the above algebra isomorphism we establish the fact that the standard shuffle product on
   the Hochschild complex
     $\Lambda H$ with $H=H^*(Y;\Bbbk)$ polynomial is \emph{geometric}.
More precisely,
 given  a cocycle $z\in C^{n}(Y;\Bbbk),$ the product
 $z\smile_1 z\in C^{2n-1}(Y;\Bbbk)$ is a cocycle for  $n$  even or
 for $n$  odd too, when $z^2$ is of the second order. In any case, the
 class $[z\smile_1 z]\in H^{2n-1}(Y;\Bbbk)$ is defined and is denoted
 by $Sq_1[z].$
 Clearly, if $H=H^*(Y;\Bbbk)$ is a polynomial algebra and
$\Bbbk$ has no 2-torsion, then
 $H$ is evenly graded and
$Sq_1$ is identically zero on $H.$ However,
 $H$
may have odd dimensional generators
 for $\Bbbk=\mathbb{Z}/2 \mathbb{Z}.$
We have the following theorem that generalizes a well-known result when $\Bbbk$ is a
field.
\begin{theorem}\label{diff.forms}
Let $H=H^*(Y;\Bbbk)=S(U)$  be a polynomial algebra  such that $Sq_1=0$ on $H.$
 Let $\Lambda( s^{_{-\!1}}\! U)$
be the exterior algebra over the desuspension of the polynomial generators $U.$ Then
there are algebra isomorphisms
$$ H(\Lambda Y;\Bbbk)
\approx S(U)\otimes \Lambda( s^{_{-\!1}}\! U)\approx H(Y;\Bbbk)\otimes H(\Omega
Y;\Bbbk).
$$

\end{theorem}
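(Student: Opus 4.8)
The plan is to transport the problem to Hochschild cohomology via the algebra isomorphism $\Upsilon^*\colon H^*(\Lambda Y;\Bbbk)\xrightarrow{\approx}HH_*(C^*(Y;\Bbbk))$ established above, and then to compute the right-hand side for a polynomial cohomology algebra. Since the product on $HH_*(C^*(Y))$ is the one carried by formula (\ref{formula}) coming from the canonical hga structure on $C^*(Y)$, it suffices to produce an algebra isomorphism $HH_*(C^*(Y;\Bbbk))\approx S(U)\otimes\Lambda(s^{-1}U)$; the identification of the target with $H(Y;\Bbbk)\otimes H(\Omega Y;\Bbbk)$ is then carried out separately.

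For the underlying module I would filter the Hochschild complex $\Lambda C^*(Y)$ by tensor (resolution) length. On the associated graded the coefficient algebra $C^*(Y)$ is replaced by its cohomology $H=H^*(Y)=S(U)$, so the spectral sequence of the filtration has $E_2=HH_*(S(U))$; here the primary $\smile_1$ operation vanishes because $H$ is strictly commutative. For the polynomial algebra $S(U)$ the Hochschild--Kostant--Rosenberg theorem gives $HH_*(S(U))\cong S(U)\otimes\Lambda(s^{-1}U)$, the exterior factor being the module of Kähler differentials placed in Hochschild degree one. As a bigraded module this is already the claimed answer, generated by $U$ in filtration zero together with the classes $s^{-1}u$.

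The multiplicative statement is the crux. By construction the product induced by formula (\ref{formula}) is the \emph{geometric} product, and what must be shown is that, modulo $Sq_1$, it coincides with the algebraic shuffle product on $\Lambda H=S(U)\otimes\Lambda(s^{-1}U)$. The only place where the two can differ is the square of an exterior generator: a direct reading of formula (\ref{formula}) expresses $(s^{-1}u)^2$ through the $\smile_1$-square of a representing cocycle, giving $(s^{-1}u)^2=s^{-1}\!\bigl(Sq_1 u\bigr)$ in $HH_*$. The hypothesis $Sq_1=0$ on $H$ thus forces every such square to vanish, and the same vanishing annihilates the only potentially nonzero higher (secondary, $\smile_1$-induced) differential in the spectral sequence. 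By multiplicativity $E_2$ then consists of permanent cycles with no surviving extension, whence $E_2=E_\infty$ and $HH_*(C^*(Y))\approx S(U)\otimes\Lambda(s^{-1}U)$ as algebras. This is exactly the assertion that the shuffle product on $\Lambda H$ is geometric, and it explains why the restriction to $\Bbbk=\mathbb{Z}/2\mathbb{Z}$ (where $U$ may carry odd generators) is the only delicate case.

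It remains to match the two tensor factors. The first is $H(Y;\Bbbk)=S(U)$ by hypothesis. For the second, the cobar equivalence $\omega_*$ underlying the construction of $\Upsilon$ identifies $H(\Omega Y;\Bbbk)$ with the homology of the cobar construction on the coalgebra dual to $S(U)$, which by Koszul duality is the exterior algebra $\Lambda(s^{-1}U)$; since $Sq_1=0$ also trivialises the relevant cobar squares, this identification is valid integrally and as algebras. I expect the main obstacle to be precisely the multiplicative step: one must verify that $Sq_1$ is the \emph{sole} obstruction, that is, that all higher hga operations entering formula (\ref{formula}) contribute nothing to the squares of the generators or to the higher differentials. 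This is where the explicit diagonal $\Delta_F$ behind formula (\ref{formula}) is essential, reducing the entire discrepancy between the geometric and shuffle products to the single operation $Sq_1$.
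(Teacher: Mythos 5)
Your reduction to $HH_*(C^*(Y;\Bbbk))$ via $\Upsilon^*$ and the additive identification of the $E_2$-term with $S(U)\otimes\Lambda(s^{-1}U)$ by Hochschild--Kostant--Rosenberg are fine, but the multiplicative step --- which you yourself flag as the crux --- contains a genuine gap that the filtration argument cannot close. Knowing that $E_2=E_\infty$ and that $E_\infty$ is isomorphic as an algebra to $S(U)\otimes\Lambda(s^{-1}U)$ does not determine the algebra $HH_*(C^*(Y;\Bbbk))$: there remain multiplicative extension problems, and the paper's closing Example is designed precisely to exhibit a dga $C$ (for $Y=\mathbb{C}P^{\infty}\times\mathbb{C}P^{\infty}$) whose spectral sequence collapses with $E_\infty$-term isomorphic as an algebra to that of the Serre spectral sequence of the free loop fibration, and yet $H^*(C)\not\approx H^*(\Lambda Y)$ as algebras. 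Likewise, the assertion that the only discrepancy between the geometric product of formula (\ref{formula}) and the shuffle product is the single relation $(s^{-1}u)^2=s^{-1}(Sq_1u)$ is not justified: formula (\ref{formula}) involves all of the higher operations $E_{k,1}$, and their contributions must be controlled coherently, not only the $\smile_1$-squares of the generators.

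The paper's proof avoids the spectral sequence entirely. It constructs a multiplicative resolution $\rho:(RH,d)\rightarrow H$ in which the operations $E_{k,1}$ and the $\smile_2$-words are adjoined as generators, together with a dga map $f:RH\rightarrow C^{*}(Y;\Bbbk)$ that commutes with the hga operations only up to explicit homotopies $s_{k,1}$; the hypothesis $Sq_1=0$ enters exactly through the choice of a cochain $\gamma(z;z)$ with $d\gamma(z;z)=z\smile_1 z$, which is used to correct $f$ on the generators $E_{2,1}$ and to define $s_{1,1}$. Lemma \ref{ederivation} then supplies an explicit chain homotopy showing that such an $f$ still induces an algebra map on Hochschild homology, and Proposition \ref{comparison} converts the quasi-isomorphisms $H\leftarrow RH\rightarrow C^{*}(Y;\Bbbk)$ into algebra isomorphisms $HH_*(H)\approx HH_*(RH)\approx HH_*(C^{*}(Y;\Bbbk))$, where $HH_*(H)$ carries the shuffle product and is computed as in Loday. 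To salvage your route you would need to supply exactly such a zig-zag (or an equivalent transfer of the $A_\infty$/hga structure) to resolve the extension problem; the filtration alone will not do it.
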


Note that in an ungraded setting of $H=H(Y;\Bbbk)$  the middle term above is
interpreted as the module of differential forms denoted by $\Omega_{_{H|\Bbbk}}$
 \cite{Loday1}.
Using a \emph{strong homotopy commutative} structure of $C^{\ast}(Y;\Bbbk)$ the
multiplication on $\Lambda C^{\ast}(Y;\Bbbk)$ is constructed in \cite{Ndombol-Thomas}
for $\Bbbk$ to be a field (see also \cite{Menichi} for references). Though there is a
close relationship between the strong homotopy commutative and hga structures,  the
explicit formula for the product  on the Hochschild chain complex $\Lambda C^{\ast}(Y)$
in terms of the hga operations
 is
heavily   used here (see Lemma \ref{ederivation}). Moreover, the proof of the above
theorem suggests further calculations  of the Hochschild homology for a more wide class
of spaces having, for example, the cohomology isomorphic to smooth algebras.

Restricted to finitely generated polynomial algebras, i.e., $U$ is finite dimensional,
Theorem 1 agrees with the solution of the Steenrod problem given in \cite{Ande-Grod}.
At the end of the paper we include  an example showing that there is a tensor product
algebra $C$ but with perturbed differential such that the $E_{\infty}$-term of the
spectral sequence of $C$ is isomorphic as algebras with the $E_{\infty}$-term of the
Serre spectral sequence of the free loop fibration with the base
$Y=\mathbb{C}P^{\infty}\times \mathbb{C}P^{\infty},$
   nevertheless $H^*(C)$ differs from $H^*(\Lambda Y)$ as algebras.

Finally,  note that a large part of the paper consists of an editing excerpt from the
earlier preprint \cite{sane2000} where in particular the polytopes $F_n$ were
constructed. In the meantime 3-dimensional polytope $F_3$ appeared in \cite{Breen}(see
also \cite{Loday2}).

I would like to thank  J.-C. Thomas, J.-L. Loday and T. Pirashvili for helpful
conversations about topological and algebraic aspects of the question. Special thanks
are to Jim Stasheff for his continuous interest and suggestions. I also thank the
referee for helpful comments.

\section{some preliminaries and conventions}

We adopt the notions and notations from \cite{KScubi}.
\subsection{Cobar and Bar constructions}

Let $\Bbbk$ be a commutative ring with identity. For a $\Bbbk$-module $M,$ let $T(M)$
be the tensor algebra of $M$, i.e. $ T(M)=\oplus _{i=0}^{\infty }M^{\otimes i}$. An
element $a_{1}\otimes ...\otimes a_{n}\in M^{\otimes n}$ is denoted by $[a_{1}|\cdots
|a_{n}]$. We denote by $s^{-1}M$ the desuspension of $M$, i.e. $(s^{-1}M)_{i}=M_{i+1}$.

Let $(C,d_{C},\Delta)$ be a 1-reduced dgc, i.e. $C_0=\Bbbk,\,C_1=0.$ Denote
$\bar{C}=s^{-1}(C_{>0})$. Let $\Delta=Id\otimes1+1\otimes Id+{\Delta^{\prime}}$. The
(reduced) cobar construction $\Omega C$ on $C$ is the tensor algebra $T(\bar{C})$, with
differential $d=d_{1}+d_{2}$ defined for $\bar{c}\in\bar{C}_{>0}$ by
\begin{equation*}
d_{1}[\,\bar{c}\,]=-\left[\,\overline{d_{C}(c)}\,\right]\ \ \text{and}\ \
d_{2}[\bar{c}]=\sum(-1)^{|c^{\prime}|}\left[\,\bar{c^{\prime}}|\bar{c^{\prime\prime}
}\,\right],\ \ \ \text{for}\ \ \ {\Delta^{\prime}}(c)=\sum c^{\prime}\otimes
c^{\prime\prime},
\end{equation*}
extended as a derivation. The acyclic cobar construction $\Omega(C;C)$ is the twisted
tensor product $C\otimes\Omega C$ in which the tensor differential is twisted by the
universal twisting cochain $\tau_*: C\rightarrow\Omega C$ being an inclusion of degree
$-1.$

Let $(A,d_{A}, \mu)$ be a 1-reduced dga. The (reduced) bar construction $BA$ on $A$ is
the tensor coalgebra $T(\bar A),\ \bar A= s^{-1}(A_{>0}),$ with differential $d=d_{1}
+d_{2} $ given for $[\bar a_{1}|\dotsb|\bar a_{n}] \in T^{n}(\bar A)$ by
\begin{equation*}
d_{1}[\bar a_{1}|\dotsb|\bar a_{n}]=-\sum_{i=1}^{n} (-1)^{\epsilon_{i-1}}[\bar
a_{1}|\dotsb|\overline{d_{A}(a_{i})}|\dotsb|\bar a_{n}],
\end{equation*}
and
\begin{equation*}
d_{2} [\bar a_{1}|\dotsb|\bar a_{n}]=- \sum_{i=1}^{n-1} (-1)^{\epsilon_{i}}[\bar
a_{1}|\dotsb|\overline{a_{i}a_{i+1}}\,|\dotsb|\bar a_{n}],
\end{equation*}
where $\epsilon_{i}=\epsilon^a_i=|a_1|+\cdots +|a_i|+i.$ The acyclic bar construction
$B ( A; A)$ is the twisted tensor product $A\otimes BA$ in which the tensor
differential is twisted by the universal twisting cochain $\tau^*: BA\to A$ being a
projection of degree $1.$

\subsection{Cartier and Hochschild chain complexes}
Let $(C,d_{C},\Delta)$ be a 1-reduced dgc and let $\Delta=Id\otimes 1+\Delta_1
=1\otimes Id+\Delta_2.$ The (normalized) \emph{Cartier complex } $\Lambda C$
 of $C$ \cite{Cartier}
is  $C\otimes  {\Omega}C$ with  differential $d$ defined by $d= d_C\otimes 1 + 1
\otimes d_{ \Omega C}+ \theta _1+\theta _2,$
 where
\begin{equation*}
\begin{array}{llll}
\theta _1 (v\otimes [\bar c_1|\dotsb |\bar c_n]) = -\sum (-1)^{|v_1'|}\,v_1'\otimes
 [\bar v_1''|\bar c_1|\!\dotsb\! |\bar c_n],\newline $\vspace{1mm}$
 \\
\theta _2 (v\otimes [\bar c_1|\dotsb |\bar c_n]) = \sum (-1)^{(| v_2'|+1) (|v_2''|+
\epsilon^c_{n})}\, v_2''\otimes [\bar c_1|\!\dotsb \!|\bar c_{n}|\bar v'_2],
\newline $\vspace{1mm}$\\
\hspace{3.1in} \Delta_i(v)=\sum v'_i\otimes v''_i,\,i=1,2.

\end{array}
\end{equation*}

The homology of  $\Lambda C$ is called the Cartier homology
 of a dgc
$C$ and is denoted by $HH_*(C).$

Note that the components $\theta_1$ and $\theta_2$ above can be thought of as obtained
by applying the universal twisting cochain $\tau_{*}: C\rightarrow \Omega C$ on the
tensor product $C\otimes \Omega C$  twice:
$$C\otimes \Omega C \overset{\Delta\otimes 1}\longrightarrow
C\otimes C\otimes \Omega C \overset{{1\otimes \tau_*\otimes 1+(\!1\otimes
1\otimes\tau_*\!)T}} {\longrightarrow}C\otimes \Omega C\otimes \Omega
C\overset{1\otimes \mu}{\longrightarrow} C\otimes \Omega C,
$$
where $T:C\otimes (C\otimes \Omega C)\rightarrow  (C\otimes \Omega C)\otimes C;$
consequently, $\Lambda C$ is a \emph{bitwisted tensor product} $C\,_{\tau_*}\!\!\otimes
_{\tau_*}\Omega C.$

The (normalized) \emph{Hochschild complex}  $\Lambda A$
 of a 1-reduced  associative   dg   algebra $A$ (\cite{Loday1})
is  $A\otimes  {B}A$ with  differential $d$ defined by $d= d_A\otimes 1 + 1 \otimes d_{
B A}+ \theta ^1+\theta ^2 ,$ where
\begin{equation*}
\begin{array}{lll}
\theta ^1(u\otimes [\bar a_1|\dotsb |\bar a_n]) = -(-1)^{|u|}ua_1\otimes [\bar
a_2|\dotsb |\bar a_n],\newline $\vspace{1mm}$ \\

\theta ^2 (u\otimes [\bar a_1|\dotsb |\bar a_n]) =(-1)^{(| a_n|+1)
(|u|+\epsilon^a_{n-1})} a_n u\otimes [\bar a_1|\dotsb |\bar a_{n-1}].
\end{array}
 \end{equation*}

The homology of  $\Lambda A$ is called the Hochschild homology of a dga $A$ and is
denoted by $HH_*(A).$

Dually, the components $\theta^1$ and $\theta^2$  can be thought of as obtained by
applying the universal twisting cochain $\tau^{*}:B A\rightarrow A$ on the tensor
product $A\otimes BA$  twice; consequently, $\Lambda A$ is a \emph{bitwisted tensor
product} $A\,_{\tau^*}\!\!\otimes _{\tau^*}BA.$

\section{The  polytopes $F_n$}

It is well known that the standard cube $I^n$ can be viewed as obtained from the
standard simplex $\Delta^n$  with vertices $(v_0,v_1,...,v_n)$ by a truncating
procedure starting either at the minimal vertex $v_0$ or at the maximal vertex $v_n;$
One gets a polytope, called a freehedron and denoted by $F_n,$ when  the truncations
start at both vertices $v_0$ and $v_n$ simultaneously; since these two truncations do
not meet, we can begin the truncation procedure  at vertex $v_0$ to obtain first the
standard cube $I^n,$ and then continue the same procedure at vertex $v_n$ to recover
$F_n.$ Thus, we obtain the canonical cellular projection, a "healing" map, $\varphi:
F_n\rightarrow \Delta^n$ such that it factors through the projections $\phi:
F_n\rightarrow I^n$ and $\psi:I^n\rightarrow \Delta^n$ (see Figures 1-3).

It is convenient, to regard $F_n$ as a subdivision of $I^n$ and to give the following
inductive construction of $F_n$ whose faces are labelled three types of face operators
$d^0_i,d^1_i$ and $d^2_i$  with $d^1_1=d^2_1.$ Let $F_0$ and $F_1$ be a point and an
interval respectively. If $F_{n-1}$ has been constructed, let $e^{\epsilon}_{i}$ denote
the face $(x_1,...,x_i,\epsilon,x_{i+1},....,x_{n-1})\subset I^n$ where $\epsilon=0,1$
and $1\leq i\leq n.$  Then $F_n$ is the subdivision of $F_{n-1}\times I$ given below
and its various $(n-1)$-faces are labelled as indicated:
\begin{equation*}
\begin{tabular}{c|cc}
$\underset{\ }{\text{\textbf{Face of }}F_{n}}$ & \textbf{Face operator} &
\\ \hline

 $e^{0}_{i}$ & $d^{0}_{i} ,$ & \multicolumn{1}{l}{$1\leq
i\leq n$} \\

$e^{1}_{i}$ & $d^{1}_{i},$ & \multicolumn{1}{l}{$2\leq i\leq n$} \\

$d^{2}_{i}\times I$ & $d^{2}_{i},$ &
\multicolumn{1}{l}{$1\leq i\leq n-2$} \\

$d^{2}_{n-1}\times\left[0,\frac{1}{2}\right]^{\strut } $ & $d^{2}_{n-1},$
& \multicolumn{1}{l}{} \\

$d^{2}_{n}\times\left[\frac{1}{2},1  \right]^{\strut } $ & $d^{2}_{n},$ &
\multicolumn{1}{l}{}
\end{tabular}
\end{equation*}
\vspace{0.2in}

\unitlength=1.00mm \special{em:linewidth 0.4pt} \linethickness{0.4pt}
\begin{picture}(112.33,43.33)
\put(9.67,40.00){\line(0,-1){20.00}} \put(9.67,20.00){\line(1,0){20.33}}
\put(30.00,20.00){\line(0,1){20.00}} \put(30.00,40.00){\line(-1,0){20.33}}
\put(9.67,40.00){\circle*{1.33}} \put(30.00,40.00){\circle*{1.33}}
\put(9.67,20.00){\circle*{1.33}} \put(30.00,20.00){\circle*{1.33}}
\put(30.00,30.00){\circle*{1.33}} \put(19.67,16.67){\makebox(0,0)[cc]{$d^0_2$}}
\put(19.67,43.33){\makebox(0,0)[cc]{$d^1_2$}}
\put(6.00,29.67){\makebox(0,0)[cc]{$d^0_1$}}
\put(33.33,35.00){\makebox(0,0)[cc]{$d^2_2$}}
\put(33.00,24.67){\makebox(0,0)[cc]{$d^2_1$}}
\put(27.50,24.67){\makebox(0,0)[cc]{$d^1_1$}} \put(94.00,40.00){\line(2,-1){17.67}}
\put(111.67,31.00){\line(-1,0){22.00}} \put(89.67,31.00){\line(0,-1){20.33}}
\put(89.67,10.67){\line(1,0){22.00}} \put(111.67,10.67){\line(0,1){20.33}}
\put(89.67,10.67){\line(-2,1){17.67}} \put(72.00,19.67){\line(0,1){20.33}}
\put(72.00,19.67){\line(1,0){17.00}} \put(90.34,19.67){\line(1,0){3.67}}
\put(94.34,40.00){\line(0,-1){8.33}} \put(94.34,30.33){\line(0,-1){10.67}}
\put(94.34,19.67){\line(2,-1){17.33}} \put(102.67,35.33){\line(0,-1){4.00}}
\put(102.67,30.33){\line(0,-1){15.00}} \put(94.34,28.33){\line(5,-2){8.33}}
\put(72.00,40.00){\line(1,0){22.00}} \put(89.67,31.00){\line(-2,1){17.67}}
\put(94.34,40.00){\circle*{1.33}} \put(72.00,40.00){\circle*{1.33}}
\put(94.34,28.33){\circle*{1.33}} \put(102.67,25.00){\circle*{1.33}}
\put(102.67,35.67){\circle*{1.33}} \put(111.67,31.00){\circle*{1.33}}
\put(89.67,31.00){\circle*{1.33}} \put(94.34,19.67){\circle*{1.33}}
\put(102.67,15.33){\circle*{1.33}} \put(111.67,10.67){\circle*{1.33}}
\put(89.67,10.67){\circle*{1.33}} \put(72.00,19.67){\circle*{1.33}}
\put(98.00,33.67){\makebox(0,0)[cc]{$d^2_3$}}
\put(98.67,22.33){\makebox(0,0)[cc]{$d^2_2$}} \ \
\put(107.60,23.67){\makebox(0,0)[cc]{$_{d^2_1(d^1_1)}$}}
\put(90.67,35.00){\makebox(0,0)[cc]{$d^1_3$}}
\put(83.34,30.33){\makebox(0,0)[cc]{$d^1_2$}}

\put(83.34,17.33){\makebox(0,0)[cc]{$d^0_1$}}
\put(92.34,16.67){\makebox(0,0)[cc]{$d^0_3$}}
\put(99.34,13.33){\makebox(0,0)[cc]{$d^0_2$}}
\end{picture}

\vspace{-0.2in}
\begin{center} Figure 1: $F_{n}$ as a subdivision of $F_{n-1}\times I$
for $n=2,3.$
\end{center}

\vspace{0.2in}

 Thus, $F_2$ is a pentagon, $F_3$ has eight 2-faces (4 pentagon and 4 quadrilateral), 18
edges and 12 vertices (compare \cite{Breen}). In particular, the sequence of
codimension 1 faces of $F_n,\,n\geq1, $ is an arithmetic progression with difference 3.

\subsection{The singular $F_n$-set}
Before we give the notion of an \emph{abstract $F_n$-set} below, let consider its
universal example,
 the \emph{singular} $F_n$-set  $\operatorname{Sing}^F Y$ of a topological space
 $Y$, i.e.
the set of all continuous maps
  $\{I^{k} \times F_{m}\times
I^{\ell}\rightarrow Y \}_{
 m,k,\ell\geq 0}.$
The face  and degeneracy operators  are defined as follows.

Obviously,  the face $d^1_i(F_m)$ is homeomorphic to $F_{m-1}$ for each $i,$ while
according to the orientation of the cube $I^m$ the faces
 $d^0_i(F_m)$ and
$d^2_i(F_m)$ are homeomorphic  to $F_{i-1}\times I^{m-i}$ and $ I^{i-1}\times  F_{m-i}$
respectively; let these homeomorphisms be realized by the following inclusions
$$
\begin{array}{l}
\bar\delta ^{0} _i : F_{i-1}\times I^{m-i} \hookrightarrow F_m,\\
\bar \delta ^{1} _i :F_{m-1} \hookrightarrow F_m,\\
\bar \delta ^{2} _i: I^{i -1}\times F_{m-i}\hookrightarrow F_m \\
\end{array}
 $$
 with $\bar \delta^2_1=\bar \delta^1_1.$
Let
 $\iota^{\epsilon}_{i}:I^{k}\hookrightarrow I^{k+1}$
be the inclusion defined by $\iota^{\epsilon}_{i}(I^k)=e^{\epsilon}_{i}.$
 Given $k,\ell,m\geq 0$ with  $m+\ell=r_1,$ $k+m+\ell=r_2,$
 define the  inclusions
\[\delta^{\epsilon}_i: I^{k_{\epsilon,i}}\times F_{m_{\epsilon,i}}\times
I^{\ell_{\epsilon,i}} \hookrightarrow I^k\times F_m\times I^{\ell}\]
 by
$$
\delta^{\epsilon}_{i}=\left\{ \!\!\!
\begin{array}{llll}
 1\!\times \!\bar \delta^0_i \!\times \!1,&
\epsilon=0, & \!\! (k,m,\ell)_{0,i}=(k,m-i,\ell+i-1)  , &\! 1\leq i\leq m,\vspace{1mm}\\

1\!\times \!\bar \delta^1_i\!\times \!1,&
\epsilon=1,  &\!\!  (k,m,\ell)_{1,i}=(k,m-1,\ell)    , & \!1\leq  i\leq m,\vspace{1mm}\\

1\!\times \!1\!\times\! \iota^{\epsilon}_{_{i-m}},\! \! & \epsilon=0,\!1,  &
 \!\!
(k,m,\ell)_{\epsilon,i}=(k,m,\ell-1) , &\!
 m< i\leq r_1,\vspace{1mm}\\

\iota^{\epsilon}_{_{i-r_1}}\!\times \!1 \!\times \!1,\! \! & \epsilon=0,\!1,  &\!\!
(k,m,\ell)_{\epsilon,i}=(k-1,m,\ell)
 , &\!
 r_1     < i\leq r_2,\vspace{1mm}\\

1\!\times\! \bar \delta^2_i\!\times \!1,\!\!& \epsilon=2, &\!\!
(k,m,\ell)_{2,i}=(k+i-1,m-i,\ell), &\! 1\leq i\leq m.
\end{array}
\right.
$$
 Then define
 the face
operators
$$ d^{\epsilon}_i :  (\operatorname{Sing}^F Y)^{m,n} \rightarrow
(\operatorname{Sing}^FY)^{m_{\epsilon,i}\,,\,n_{\epsilon,i}}$$ for   $   f\in
(\operatorname{Sing} ^F Y)^{m,n}, $\, $
 f: I^{k}\times F_{m}\times I^{\ell}\rightarrow Y ,\, n=k+\ell,$ \, by
 $d^{\epsilon}_i(f) =f\circ \delta^{\epsilon}_{i},$
$\epsilon=0,1,2.$

 Given $1\leq i\leq n+1,$ define the degeneracy operators
$$
\eta_i : ( \operatorname{Sing}^F Y)^{m, n} \rightarrow (\operatorname{Sing}^F Y)^{m,
n+1}
 $$
by
 \[\eta_i(f)=\left\{\begin{array}{lll}
 f\circ (  1 \times 1\times  \bar\eta_{i}
), & 1\leq  i\leq \ell+1\\
f\circ (\bar\eta_{i-\ell}\times1 \times 1 ),& \ell+1< i\leq n+1, & n=k+\ell,
\end{array}
\right.
\]
with \ \  $ \bar\eta_i: I^{r+1}\rightarrow I^{r},\ \ \
\bar\eta_{i}(x_1,...,x_{r+1})=(x_1,...,x_{i-1},x_{i+1},...,x_{r+1}). $

Thus, we obtain the singular $F_n$-set
 $$\{\operatorname{Sing}^{F}
Y=\bigcup_{k,m,\ell\geq 0}\left[(\operatorname{Sing}^{F} Y)^{m,k+\ell}=\{I^{k}\times
F_m\times I^{\ell}\rightarrow Y\}\right], d^{0}_i,d^1_i,d^2_i,\eta_i\}$$ the face and
degeneracy operators of which satisfy the following equalities:

\begin{equation}\label{hohaxioms}
\begin{array}{ll}
d_i^0 d_j^0=d_{j-1}^0  d_i^0,   \, \ \ \  \ \ \ \ \ \ \  \ \ \ \ \ \ \ i<j,  \vspace{1mm} \\

d_i^1 d_j^1=d_{j-1}^1 d_i^1,  \, \ \ \  \ \  \ \ \ \ \ \ \ \ \ \ \ \ i<j\ \ \text{and}\
\ (i,j)\neq (1,2)\ \text{for}\
 m>0, \vspace{1mm}\\

d_i^1d_j^0= \left\{ \! \! \! \begin{array}{lll}  d_{j-1}^0d_i^1,  &\!  \, \ \ \ \ \ \ \
\ \ \ \
\ \   i<j, \vspace{1mm} \\
                            d_{j}^0d_{i+1}^1,  &\!

                          \,  \ \  \ \  \ \ \ \ \ \ \ \ \  i\geq j,
\end{array}
\right.
\vspace{1mm}\\
d^2_id^0_j=d^0_{j-i}d^2_i, \vspace{1mm} \\

d_i^2d_j^1= \left\{ \! \! \! \begin{array}{lll}   d_{j-i}^1 d_{i}^2,  &   \,   i< j-1,
\vspace{1mm} \\
            d_{m+n+j-i-2}^1d_{i+1}^2,  &  \,  i\geq j-1>0,
\end{array}
\right.
\vspace{1mm} \\

d^2_id^2_j=d^0_{m+n-i}d^2_{i +j},\vspace{1mm}\\

d_{i}^{\epsilon }\eta_{j}=\left\{  \! \! \!
\begin{array}{ccc}
\eta_{j-1}d_{i}^{\epsilon }  & \ \  i<m+j, \\
1 &                           \ \   i=m+j, \\
\eta_{j}d_{i-1}^{\epsilon } &    \ \ \ \ \ \ \ \ \ \ \ \  \ \ i>m+j,\ \epsilon=0,1,
\end{array}
 \right.
\vspace{1mm}\\

d^{2}_i \eta_j = \eta_jd^2_i,
\vspace{1mm}\\

\,\eta_i\eta_j=\eta_{j+1}\eta_i,    \ \  \ \  \ \ \ \ \  \  \ \ \ \ \ \ \  \ \  i\leq
j.
\end{array}
\end{equation}

Note that the compositions $d^1_1d^1_2$ and $d^1_1d^1_1$ that are eliminated from the
second equality above are involved  in the fifth and sixth ones as
$d^2_1d^1_2=d^2_{m+n-1}d^2_{2}$ and $d^2_1d^2_1=d^2_{m+n-1}d^2_{2}$ by taking into
account that $d^1_1=d^2_1.$

\subsection{Abstract freehedral sets} For a general theory of polyhedral sets, see
for example \cite{thesis}, \cite{DWjones}. Motivated by the underlying combinatorial
structure of the bitwisted Cartesian product $\mathbf \Lambda X$ below we give the
following
\begin{definition}
 An \underline{$F_n$-set ${\mathcal {CH}}$} is a bigraded set
$  {{\mathcal {CH}}}= \{{{\mathcal {CH}}}^{m, n}, m, n\geq 0\}$
  with total grading $m+n $ and
 three types of faces operators $d_i^0,\, d_i^1,\, d_i^2$
 with $d_1^1=d^2_1$ for $m> 0,$
$$
\begin{array}{ll}
 d^0_i : {\mathcal {CH}}^{m, n} \rightarrow  {\mathcal {CH}}^{i-1,m+n-i}, & 1\leq i\leq m,\\
 d^0_i :
{\mathcal {CH}}^{m, n} \rightarrow  {\mathcal {CH}}^{m,n-1}, & m< i\leq m+n,
\vspace{1mm} \\

 d^1_i :
{\mathcal {CH}}^{m, n} \rightarrow  {\mathcal {CH}}^{m-1,n}, & 1\leq i\leq m,
\\
 d^1_i : {\mathcal {CH}}^{m, n} \rightarrow  {\mathcal {CH}}^{m,n-1}, & m< i\leq m+n,
 \vspace{1mm}\\
 d^2_i : {\mathcal {CH}}^{m, n} \rightarrow  {\mathcal {CH}}^{m-i,n+i-1}, & 1\leq i\leq m,
 \\
\end{array}
$$
and degeneracy operators
$$
\begin{array}{ll}
\eta_i : {\mathcal {CH}}^{m, n} \rightarrow {\mathcal {CH}}^{m,n+1}, & 1\leq i\leq n+1   \\
\end{array}
 $$
satisfying  structural identities (\ref{hohaxioms}).

A  \underline{morphism} of $F_n$-sets is a family of maps $f=\{ f_{m,n} \},$ $f_{m,n}:
{\mathcal {CH}}^{m,n}
 \rightarrow {{\mathcal {CH}}'}^{m,n},$
commuting with all face and degeneracy operators.
\end{definition}

Note that unlike $d^{0}_i$ and $d^1_i$ the face operator $d^2_i$ is defined only for
$1\leq i \leq m.$ Consequently, for $m=0$ we  have  only of two types of face operators
$d^{\epsilon}_i,\epsilon=0,1,$ acting on ${\mathcal {CH}}^{0, n}$ that satisfy the
standard \emph{cubical} relations;
 so that, in every $F_n$-set
the subset $\{{\mathcal {CH}}^{0, r}\}_{r\geq 0}$ together with  the
 operators $d^0,$  $d^1$ and $\eta_i$ forms a cubical
set. Furthermore, the  operators $d^0_1$ and $d^2_{m}$  have the image in this subset.

The $F_n$-set structural relations also can be conveniently verified by means of the
following combinatorics of the polytopes $F_m$ (compare with Proposition 3.2 in
\cite{KScubi}). The top dimensional cell of $F_m$ is identified with the set
$0,1,...,m],$ while any proper $q$-face $u$ of $F_{m}$ is expressed as (see Figure 2)
\begin{multline*}
\!\!\!\!\!u\!= i_{s_t},\!...,i_{s_{t+1}}][i_{s_{t+1}},\!...,i_{s_{t+2}}]
...[i_{s_{k-1}},\!...,i_{s_{k}},\!m] [0,i_1,\!...,i_{s_1}][i_{s_1},\!...,i_{s_{2}}]
...[i_{s_{t-1}},\!...,i_{s_t}],
\\
0<i_{1}<\ldots <i_{s_{t}}<\ldots <i_{s_{k}}<m,\,\,  q=s_k-k+1.
\end{multline*}
where for $t=0$ the face $u$ is assumed to have   the form
\[u=i_{1},...,i_{s_{1}}][i_{s_{1}},...,i_{s_{2}}] ...[i_{s_{k-1}},...,i_{s_{k}},m]\]
with $0\leq i_{1}<\ldots <i_{s_{k}}<m.$
\begin{proposition}
\label{Fcubi}
 Let the face  operators $d^0,d^1,d^2$
act on  $ a_0,a_1,\!...,a_m][b_0,\!...,b_{n+1}]$ (thought of as the top cell of
$F_m\times I^n$) by
\[
\begin{array}{llll}
 a_0,a_1,\!...,a_m][b_0,\!...,b_{n+1}]\overset{d^{0}_{i}}{\rightarrow}\!
\left\{\!\!\!
 \begin{array}{lll}
a_0,a_1,\!...,a_{i-1}][a_{i-1},\!...,a_{m}][b_0,\!...,b_{n+1}], & 1\!\leq\! i\!\leq\! m,\\
a_0,a_1,\!...,a_{m}][b_0,...,b_{j}][b_j,...,b_{n+1}],& i\!=m\!+\!j\\
\end{array}
\right. \\
\\

 a_0,a_1,\!...,a_m][b_0,\!...,b_{n+1}]\overset{d^{1}_{i}}{\rightarrow}\!
 \left\{\!\!\! \!  \begin{array}{llll}
a_1,\!...,a_m][b_0,\!...,b_{n+1}][a_0,a_1], \  & && i=1,\\
a_0,a_1,\!...,\hat{a}_{i-1},\!...,a_m][b_0,\!...,b_{n+1}], \  & & &  2\!\leq \!i\!\leq\! m,\\

a_0,a_1,\!...,a_{m}][b_0,\!b_1,\!...,\hat{b}_{j},\!...,b_{n+1}], \  &&  &  i\!=m\!+\!j\\
\end{array}
\right.
 \\

 a_0,a_1,\!...,a_m][b_0,\!...,b_{n+1}]\!\overset{d^{2}_{i}}{\rightarrow}
a_i,\!...,a_{m}][b_0,\!...,b_{n+1}][a_0,a_1,\!...,a_i], \hspace{0.5in}  1\!\leq i\leq
m.
\end{array}
\]
 Then the
relations among $d^{\epsilon}$'s for $\epsilon=0,1,2$   agree with the $F_n$-set
identities.
\end{proposition}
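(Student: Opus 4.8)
\section{Proof proposal for Proposition \ref{Fcubi}}

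The plan is to verify each of the six face--face identities in (\ref{hohaxioms}) by direct computation: I apply both composites to the generic top cell $a_0,a_1,\dots,a_m][b_0,\dots,b_{n+1}]$ of $F_m\times I^n$ and check that the two resulting bracket expressions coincide letter for letter. The first step is to fix the bookkeeping, recording how the bigrading $(m,n)$ transforms under each operator (for instance $d^0_i$ with $i\le m$ yields $(i-1,m+n-i)$, while $d^2_i$ yields $(m-i,n+i-1)$), since the index of the outer operator in a composite is read in the \emph{new} bigrading; a dimension count against the stated target bigradings confirms consistency. I then extend the three operators from the top cell to an arbitrary face, which is a string of brackets exactly one of which is the distinguished freehedral block $c_0,\dots,c_p]$: the operators $d^0_i,d^1_i$ with freehedral index act on that block, those with cubical index act on the concatenation of cube brackets read as a single cube (via the cobar-type splitting $[c_0,\dots,c_k]\mapsto[c_0,\dots,c_r][c_r,\dots,c_k]$ for $d^0$, and coordinate deletion for $d^1$), and $d^2_i$ peels the initial segment $a_0,\dots,a_i$ off the freehedral block and appends it as a trailing cube bracket at the very end.

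For each identity I split into subcases according to whether the two indices fall in the freehedral range ($\le m$) or the cubical range ($>m$). The relations $d^0d^0$, $d^1d^1$ and $d^1d^0$ are of the familiar simplicial/cubical commuting type: the two operators act either on disjoint portions of the string or on nested segments of one block, and the verification reduces to the elementary fact that splitting or deleting at position $i$ and then at position $j$ is order-independent up to the standard index shift. The one point needing comment is the exclusion of $(i,j)=(1,2)$ from the $d^1d^1$ relation: because $d^1_1=d^2_1$, the composites $d^1_1d^1_2$ and $d^1_1d^1_1$ are not governed by $d^1d^1$ but by the $d^2$ relations, and I check that both reduce to the common value predicted there. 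The first case $i<j-1$ of $d^2d^1$ is likewise routine, since the appended block produced by $d^2$ lands strictly to the right of the region touched by $d^1$.

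The substantive work, and the expected main obstacle, lies in the identities $d^2_id^2_j=d^0_{m+n-i}d^2_{i+j}$ and the second case $d^2_id^1_j=d^1_{m+n+j-i-2}d^2_{i+1}$ with $i\ge j-1>0$. The difficulty is purely one of index translation: the large indices $m+n-i$ and $m+n+j-i-2$ on the right must be converted, through the bigrading change caused by the inner $d^2$, into \emph{local} positions inside the trailing cube bracket that $d^2$ has just created. Concretely, after $d^2_{i+j}$ the block $[a_0,\dots,a_{i+j}]$ sits at the end in bigrading $(m-i-j,n+i+j-1)$, and the index $m+n-i$ is cubical with value $n+j$, falling inside that block at its internal position $j$; the cobar splitting there reproduces exactly $[a_0,\dots,a_j][a_j,\dots,a_{i+j}]$, matching the two-fold $d^2$ computed on the left. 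The parallel computation for the second $d^2d^1$ case places the cubical deletion at internal position $j-1$ of the appended block $[a_0,\dots,a_{i+1}]$, removing $a_{j-1}$ and agreeing with $d^2_id^1_j$. Once the dictionary between a global cubical index and a $(\text{bracket},\text{local position})$ pair is set up, both cross-relations follow by this single pattern.

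Finally I record the consistency noted after (\ref{hohaxioms}): since $d^1_1=d^2_1$, the compositions $d^2_1d^1_2$ and $d^2_1d^2_1$ specialize the fifth and sixth identities, so no relation is lost through the exclusion in the $d^1d^1$ case. This completes the verification that the combinatorial action of $d^0,d^1,d^2$ on the bracket labels agrees with the $F_n$-set identities.
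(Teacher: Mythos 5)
Your proposal is correct and follows exactly the route the paper intends: the paper's own proof is the single line ``It is straightforward,'' i.e.\ a direct case-by-case verification of the identities (\ref{hohaxioms}) on the bracket expressions, which is what you carry out. Your explicit index translations for $d^2_id^2_j=d^0_{m+n-i}d^2_{i+j}$ and the second case of $d^2_id^1_j$ check out and simply supply the details the paper omits.
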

\begin{proof}
It is straightforward.
\end{proof}

A degeneracy operator $ \eta _{i}$ is thought of as adding a formal element $\ast $ to
the set $a_0,a_1,...,a_m][b_0,b_1,...,b_{n+1}]$ at the $(m+i+2)^{st}$ place:
\begin{equation*}
a_0,a_1,...,a_m][b_0,b_1,...,b_{n+1}]\overset{\eta_i}{\longrightarrow}a_0,a_1,...,a_m]
[b_0,b_1,...,b_{i},\ast,b_{i+1},...,b_{n+1}]
\end{equation*}
with the convention  $[b_0,b_1,...,b_{i},\ast ][\ast
,b_{i+1},...,b_{n+1}]=[b_0,b_1,...,b_{n+1}]$ that guarantees the equality
$d_{m+i}^{0}\eta _{i}={Id}=d_{m+i}^{1}\eta _{i}$.

\vspace{0.2in}

\unitlength=1.00mm \special{em:linewidth 0.4pt} \linethickness{0.4pt}
\begin{picture}(112.33,43.33)
\put(9.67,40.00){\line(0,-1){20.00}} \put(9.67,20.00){\line(1,0){20.33}}
\put(30.00,20.00){\line(0,1){20.00}} \put(30.00,40.00){\line(-1,0){20.33}}
\put(9.67,40.00){\circle*{1.33}} \put(30.00,40.00){\circle*{1.33}}
\put(9.67,20.00){\circle*{1.33}} \put(30.00,20.00){\circle*{1.33}}
\put(30.00,30.00){\circle*{1.33}}

 \put(94.00,40.00){\line(2,-1){17.67}}
\put(111.67,31.00){\line(-1,0){22.00}} \put(89.67,31.00){\line(0,-1){20.33}}
\put(89.67,10.67){\line(1,0){22.00}} \put(111.67,10.67){\line(0,1){20.33}}
\put(89.67,10.67){\line(-2,1){17.67}} \put(72.00,19.67){\line(0,1){20.33}}
\put(72.00,19.67){\line(1,0){17.00}} \put(90.34,19.67){\line(1,0){3.67}}
\put(94.34,40.00){\line(0,-1){8.33}} \put(94.34,30.33){\line(0,-1){10.67}}
\put(94.34,19.67){\line(2,-1){17.33}} \put(102.67,35.33){\line(0,-1){4.00}}
\put(102.67,30.33){\line(0,-1){15.00}} \put(94.34,28.33){\line(5,-2){8.33}}
\put(72.00,40.00){\line(1,0){22.00}} \put(89.67,31.00){\line(-2,1){17.67}}
\put(94.34,40.00){\circle*{1.33}} \put(72.00,40.00){\circle*{1.33}}
\put(94.34,28.33){\circle*{1.33}} \put(102.67,25.00){\circle*{1.33}}
\put(102.67,35.67){\circle*{1.33}} \put(111.67,31.00){\circle*{1.33}}
\put(89.67,31.00){\circle*{1.33}} \put(94.34,19.67){\circle*{1.33}}
\put(102.67,15.33){\circle*{1.33}} \put(111.67,10.67){\circle*{1.33}}
\put(89.67,10.67){\circle*{1.33}} \put(72.00,19.67){\circle*{1.33}}

\put(6.00,42.50){\makebox(0,0)[cc]{$_{0][02]}$}}

\put(3.00,17.70){\makebox(0,0)[cc]{$_{0][01][12]}$}}

\put(35.00,42.00){\makebox(0,0)[cc]{$_{2][02]}$}}

\put(37.50,30.00){\makebox(0,0)[cc]{$_{2][01][12]}$}}

\put(37.00,17.70){\makebox(0,0)[cc]{$_{1][12][01]}$}}

\put(19.67,16.67){\makebox(0,0)[cc]{${01][12]}$}}
\put(19.67,43.33){\makebox(0,0)[cc]{${02]}$}}
\put(3.50,29.67){\makebox(0,0)[cc]{${0][012]}$}}
\put(37.00,35.00){\makebox(0,0)[cc]{${2][012]}$}}
\put(37.00,24.67){\makebox(0,0)[cc]{${12][01]}$}}

\put(99.00,33.67){\makebox(0,0)[cc]{$_{_{3]\![0123]}}$}}
\put(99.00,22.33){\makebox(0,0)[cc]{$_{_{23]\![012]}}$}} \ \
\put(107.60,23.67){\makebox(0,0)[cc]{$_{_{123]\![01]}}$}}
\put(90.67,35.00){\makebox(0,0)[cc]{$_{_{013]}}$}}
\put(83.34,30.33){\makebox(0,0)[cc]{$_{_{023]}}$}}

\put(83.34,17.33){\makebox(0,0)[cc]{$_{_{0]\![0123]}}$}}

\put(94.50,16.17){\makebox(0,0)[cc]{$_{_{012]\![23]}}$}}
\put(99.34,13.33){\makebox(0,0)[cc]{$_{_{01]\![123]}}$}}
\end{picture}
\vspace{-0.2in}
\begin{center}Figure 2. The combinatorial description of freehedra $F_2=012]$
and $F_3=0123].$
\end{center}

\section{A diagonal on the  freehedra $F_n$}\label{sdiagonal}

 For the freehedron  $F_n$ define its integral  chain
  complex
 $(C_*(F_n),d)$
with  differential
\begin{equation}\label{Fdiff}
d= \sum _{i=1}^n (-1)^{i}(d_i^0-d_i^1)+\sum _{i=2}^n (-1)^{(i-1)n}d^{2}_i.
  \end{equation}
Now define the map
$$
\Delta_F :C_*(F_n) \rightarrow C_*(F_n)\otimes C_*(F_n),
$$ for  $ u_n\in C_n(F_n), \ n\geq 0,$       by
\begin{multline}
\label{diagonal} \Delta_F (u_n)= \sum_{(K,L)} sgn (K,L)\, d^0_{j_p}\!...
d^0_{j_1}(u_n)\otimes
   d^{1}_{i_q} \!... d^1_{i_1}(u_n)+ \\
 \sum_{\substack{_{(K',L')}\\_{r+\!1\leq i_q\leq p+\!1 }\\
_{0\leq r<n}}} \!\!\!\!   (-1)^{{j_{(r)}+r+(p+1)(i_q+1)}}\!\cdot\! sgn (K',\!L')\,
 d^0_{j_p}\!...
d^{0}_{j_{r+1}}d^{2}_{j_{r}}\!... d^2_{j_1}(u_n)\otimes
   d^{2}_{i_q} d^1_{i_{q-1}}\!... d^1_{i_1}(u_n),\\
\hspace{-1.65in}
 (K,L)=\, \left(i_{q}<...<i_1\,; 1=j_{p}<...<j_{1}\right)  \ \  \text{and}\ \ \\
 \hspace{0.1in}
 (K',L') =
 \left(1<i_{q-1}\!<...<\!i_1\,; j_1\!+\!1\!<...< \!j_{(r)}\!+\!1<\!j_p\!+\!
j_{(r)}\!<...<\! j_{r+1}\!+\!j_{(r)}\right)
\end{multline}
 with  $j_{(k)}= j_1+\cdots
+j_k$ are unshuffles  of the set $\{1,...,n\}.$

\begin{proposition}
The map defined by formula (\ref{diagonal}) is a chain map.
\end{proposition}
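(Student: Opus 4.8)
The plan is to verify the chain-map identity
\[
\Delta_F\, d \;=\; (d\otimes 1 + 1\otimes d)\,\Delta_F
\]
(with the Koszul sign built into the differential of $C_*(F_n)\otimes C_*(F_n)$) by a direct expansion of both sides on the top cell $u_n=0,1,\dots,n]$, followed by a term-by-term comparison. Every codimension-one face of $F_n$ is again a freehedron $F_{n-1}$ or a product $F_a\times I^b$, so $\Delta_F$ is already available on each face; hence both $\Delta_F(d\,u_n)$ and $(d\otimes 1+1\otimes d)\Delta_F(u_n)$ are finite signed sums of tensors (composite of face operators)$\,\otimes\,$(composite of face operators) applied to $u_n$. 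The core of the argument is to bring every such composite into the normal form used in \eqref{diagonal} by means of the structural identities \eqref{hohaxioms}, and then to check that the two sides coincide.

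I would organize the comparison around the two sums in \eqref{diagonal}. First, dropping all $d^2$'s, the first sum is assembled from $d^0$ and $d^1$ exactly in the manner of the Serre (Alexander--Whitney) diagonal of a cube, and I would begin by checking its compatibility with the $d^0$- and $d^1$-parts of the differential \eqref{Fdiff}. This is a purely cubical computation: applying $d\otimes 1+1\otimes d$ produces, for each pair, contributions in which the new face is \emph{disjoint} from the split --- these reproduce termwise the corresponding summands of $\Delta_F(d\,u_n)$ --- together with contributions in which the new face is \emph{adjacent} to the split, which occur twice with opposite Koszul signs and cancel in pairs. This is the classical mechanism behind the chain-map property of the Serre diagonal, and it settles the $(d^0,d^1)$-content, using the relations $d^1_i d^0_j=d^0_{j-1}d^1_i$ and $d^1_i d^0_j=d^0_{j}d^1_{i+1}$ from \eqref{hohaxioms}.

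The substantive step is the interaction of the differential with the second, $d^2$-sum of \eqref{diagonal}, together with the $d^2$-part of \eqref{Fdiff}. Here I would use the relations $d^2_i d^0_j=d^0_{j-i}d^2_i$, the two cases of $d^2_i d^1_j$, and $d^2_i d^2_j=d^0_{m+n-i}d^2_{i+j}$ from \eqref{hohaxioms} (with $m+n$ specialized to $n$ for the polytope $F_n$) to rewrite each $d^2$-composite into normal form. The point is that the \emph{additive} index shifts $j_{(r)}=j_1+\cdots+j_r$ appearing in the shifted set $(K',L')$, and the quadratic sign $(-1)^{j_{(r)}+r+(p+1)(i_q+1)}$, are calibrated precisely so that the boundary of a $d^2$-term reproduces the $d^2$-face of a boundary term, while the surviving interior $d^2$-terms cancel in pairs exactly as in the cubical case. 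I would record this as a separate lemma asserting that $d$ commutes with the second sum.

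I expect this $d^2$ bookkeeping to be the main obstacle. Because $d^2_i$ is the subdivision face peculiar to the freehedron, its relations carry the additive shifts $j_{(r)}$ and the ``wrap'' index $m+n-i$ rather than the simple $\pm 1$ shifts governing $d^0$ and $d^1$; consequently the matching of indices and of signs between $\Delta_F(d\,u_n)$ and $(d\otimes 1+1\otimes d)\Delta_F(u_n)$ in the $d^2$-sector is delicate, and the extreme values of $r$ and the case $i=1$ (where $d^1_1=d^2_1$) must be treated separately, taking into account the remark following \eqref{hohaxioms} that $d^1_1d^1_2$ and $d^1_1d^1_1$ reappear among the $d^2$-relations. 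Once the shifts and signs are shown to line up in that sector, the remaining terms cancel or match as in the cubical base case, and the chain-map identity follows.
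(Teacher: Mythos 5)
Your plan is the same direct verification the paper has in mind --- its own proof is the single line ``The proof is straightforward,'' so the term-by-term expansion, normalization via the identities (\ref{hohaxioms}), and pairwise cancellation you describe is exactly what is being left to the reader. One small adjustment to your decomposition: the first sum of (\ref{diagonal}) alone is not the full cubical (Serre) diagonal, since $1\in L$ forces $i_q\geq 2$ and the terms with $d^1_1$ on the right factor are supplied only by the second sum at $(r,i_q)=(0,1)$ via $d^2_1=d^1_1$ (as the paper notes immediately after the formula), so the ``purely cubical'' part of your argument must be run on the first sum together with those terms --- which is consistent with your own closing remark that the case $i=1$ needs separate treatment.
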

\begin{proof}
The proof is  straightforward.
\end{proof}
Note that the components of the first summand of   (\ref{diagonal}) together with ones
of the second for $(r,i_q)=(0,1)$ agree with the components of the diagonal of the
standard cube $I^n$ \cite{Serre}, \cite{KScubi}.

Using Proposition \ref{Fcubi}  formula (\ref{diagonal}) can be rewritten in the
following combinatorial form (compare (\ref{combiserre})):
\begin{multline}\label{combidiagonal}
\Delta_F(\, 0,1,...,n])= \\
 \hspace{0.8in} \Sigma (-1)^{\epsilon_1 }\
0,1,...,j_{1}][j_{1},...,j_{2}][j_{2},...,j_{3}]...[j_{p},...,n]\otimes
 j_{1},j_{2},...,j_{p},n]+
\\
\ \  \Sigma(-1)^{\epsilon_2}j_{r},...,j_{r+1}][j_{r+1},...,j_{r+2}] ...[j_{p},...,n]
  [0,1,...,j_{1}][j_1,...,j_2]...[j_{r-1},...,j_r]
\otimes\\ j_t,...,j_p,n][0,j_1,...,j_r,...,j_{t}],
\end{multline}
where the last tensor factor for a fixed $r$ varies from
$j_{r+1},...,j_p,n][0,j_1,...,j_{r+1}]$ to
 $n][0,j_1,...,j_p,n].$

 For example, formulas (\ref{diagonal}) and
(\ref{combidiagonal}) for $F_2$ and $F_3$ read:
\begin{multline*}
\Delta_F (u_2) =\left (d^0_1d^0_2\otimes Id+Id\otimes d^1_1d^1_2 -d^0_1\otimes d^1_2 +
 d^0_2\otimes  d^1_1 \right.+\\
 \left. (d^0_2+
d^2_1)\otimes  d^2_2\right) (u_2\otimes u_2);
\end{multline*}
\begin{multline*}
\Delta_F(u_3) =\left (d^0_1d^0_2d^0_3\otimes Id+Id\otimes d^1_1d^1_2d^1_3+
 d^0_1\otimes d^1_2d^1_3-
d^0_2\otimes d^1_1 d^1_3+ d^0_3\otimes d^1_1d^1_2\right.+\\
\ \ \ \ \ \ \ \ \ \ \ \ \ \ \ \ \ \ \ \ \ \ \ \ \ \ \ \ \ \ \ \ \ \ \ \ \ \ \ \ \ \ \ \
\ \ \ \ \ \  \ \ \ \ \
 d^0_1d^0_2\otimes d^1_3-
  d^0_1d^0_3\otimes d^1_2
+d^0_2d^0_3\otimes d^1_1 -\\
 \ \ \ \ \ \ \ \ \ \ \ \ \ \ \ \
 (d^0_2+d^2_1)\otimes d^2_2 d^1_3+(d^0_3
 +d^2_2)\otimes d^2_2d^1_2
- ( d^0_2d^0_3+
 d^0_2d^2_1)\otimes (d^2_2-d^2_3)+ \\
 \left. d^0_2d^2_2\otimes d^2_3
 \right) (u_3\otimes u_3)
\end{multline*}
and
\begin{multline*}
\!\!\! \!\Delta_F \left(\,012]\right)= 0][01][12]\otimes 012]+012]\otimes 2][02] -
0][012]\otimes 02] +
 01][12]\otimes  12][01]+\\
\left(\, 01][12]  + 12][01]\right)\otimes  2][012];
\end{multline*}
\begin{multline*}
\!\!\! \!\Delta_F\left(\,0123]\right)=\\
\ \ \ \ \ \ \ \ \ \ \
 0][01][12][23]\otimes 0123]+  0123]\otimes 3][03]+
 0][0123]\otimes 03]-
01][123]\otimes 13][01] +\\
\ \ \ \ \
 012][23]\otimes 23][02]+
 01][12][23]\otimes 123][01]+
 0][01][123]\otimes 013]-0][012][23]\otimes 023]-\\
\ \ \ \ \ \ \ \ \ \ \ \ \ \ \ \ \ \ \ \ \ \ \ \ \ (\, 01][123]+123][01])\otimes
3][013]+(\,012][23] +23][012])\otimes 3][023]
-\\
 \ \ \ \ \ \ \ \ \ \ \ \ \ \ \ \ \ \ \ \ \ \ \ \ \ \ \ \ \  \ \ \ \ \ \ \ \ \ \ \ \ \
(\,01][12][23]+12][23][01])\otimes (\,23][012]-3][0123] )+\\
23][01][12]\otimes 3][0123] .
\end{multline*}
The diagonal $\Delta_F$ is compatible with the  AW diagonal of the standard simplex
$\Delta^n$ under the cellular map $\varphi :F_n\rightarrow \Delta^n.$ To see this it is
also convenient to represent $\varphi$ combinatorially as:
\begin{multline*}
\!\!\!\!      i_{s_t},\!...,i_{s_{t+1}}][i_{s_{t+1}},\!...,i_{s_{t+2}}]
...[i_{s_{k-1}},\!...,i_{s_{k}},\!n] [0,i_1,\!...,i_{s_1}][i_{s_1},\!...,i_{s_{2}}]
...[i_{s_{t-1}},\!...,i_{s_t}] \overset{\varphi}{\rightarrow} \\
\left(i_{s_t},\!...,i_{s_{t+1}}\right).
\end{multline*}
 In particular, the faces $0][0,1,...,n]$ and $n][0,1,...,n]$ of $F_{n}$,
  i.e. $ d_{1}^{0}$ and $d^2_n,$ go to the
minimal  and maximal vertices   $0\in \Delta ^{n}$ and $n\in \Delta ^{n}$ respectively
(see Figure 3).

Note that  the  diagonal  $\Delta_F$ on $C_*(F_n)$  is not coassociative, and hence,
the product on $C^*(F_n)$
   is not associative, however since the acyclicity of $F_n$  there exists an
   $A_{\infty}$-algebra
   structure on $C^*(F_n)$ (see   Subsection \ref{interaction} below).

\subsection{The diagonal on an $F_n$-set}
Given an $F_n$-set $\mathcal{CH},$   define the chain complex $(C_*(\mathcal{CH}),d)$
of $\mathcal{CH}$ with coefficients in $\Bbbk$ and with differential
$d_r:C_{r}(\mathcal{CH})\rightarrow C_{r-1}(\mathcal{CH})$  given  by
$$d_r=\bigoplus_{\substack{{r=m+n}\\{m,n\geq 0}}}\, d_{m,n},\ \ \    d_{m,n}= \sum _{i=1}^{m+n} (-1)^{i}(d_i^0-d_i^1)+\sum _{i=2}^m (-1)^{(i-1)(m+n)}d^2_i.
   $$
  The \emph{normalized chain
complex} of $\mathcal{CH}$ is
 $(C^{\odot}_{*}(\mathcal{CH}),d)= (C_*(\mathcal{CH}),d)/D,$
where  $D$ is the subcomplex of $C_*(\mathcal{CH})$ formed by degeneracies. Then apply
(\ref{diagonal}) to make $C^{\odot}_{*}(\mathcal{CH})$ as a dg \emph{coalgebra}. In
particular, given the singular $F_n$-set $\operatorname{Sing}^FY,$ we get the dg
coalgebra  $C^{\odot}_{*}(\operatorname{Sing}^FY)$ denoted by
    $(C^{\odot}_{*}(Y),d).$
 Then the cellular composition $$I^k\times F_m\times I^{\ell}\overset{1\times\phi\times
1}{\longrightarrow}I^k\times I^m\times I^{\ell}  =
I^{k+m+\ell}\overset{\psi}{\rightarrow} \Delta^{k+m+\ell}$$ induces a chain map
$C_*(Y)\rightarrow C^{\odot}_{*}(Y)$
 to obtain the
following
\begin{proposition}\label{Fcoalgebra}
There are  the natural isomorphisms  of the homologies
\[ H_*(Y)\approx  H^{\odot}_*(Y)=H_*(C^{\odot}_*(Y),d)\]
and the cohomology algebras
 \[H^*(Y)\approx  H_{\odot}^*(Y)=H^*(C_{\odot}^*(Y),d).
\]
\end{proposition}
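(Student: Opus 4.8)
The plan is to exhibit the natural chain map $g\colon C_*(Y)\to C^{\odot}_*(Y)$ described after the statement as a chain homotopy equivalence, and then to dualize to obtain the cohomology statement. Concretely, a singular simplex $\sigma\colon\Delta^n\to Y$ is sent to the $F$-cell $\sigma\circ\varphi\in(\operatorname{Sing}^F Y)^{n,0}$ obtained from the healing map $\varphi=\psi\circ\phi$, and the first task is to check that $g$ carries the simplicial boundary $\sum_j(-1)^j\partial_j$ to the $n=0$ restriction $d_{n,0}$ of the differential \eqref{Fdiff}. This is a direct computation via Proposition \ref{Fcubi} and the combinatorial form of $\varphi$: the codimension-one faces $d^1_i$ and $d^0_m$ of $F_n$ are carried by $\varphi$ homeomorphically onto the facets $\partial_{i-1}$ and $\partial_m$ of $\Delta^n$, so that $d^1_i(\sigma\circ\varphi)=g(\partial_{i-1}\sigma)$ and $d^0_m(\sigma\circ\varphi)=g(\partial_m\sigma)$ cover all $m+1$ simplicial faces, while the remaining truncation faces $d^0_i\ (i<m)$ and $d^2_i$ collapse cube or freehedral directions under $\varphi$ and hence become degenerate, dropping out of the normalized complex. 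The signs match on the nose, since the coefficient $-(-1)^i$ of $d^1_i$ in $d_{n,0}$ equals $(-1)^{i-1}$.

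The heart of the argument is an application of the method of acyclic models, for which the essential lemma is that the augmented complex $C^{\odot}_*(X)$ is acyclic whenever $X\subseteq\RR^N$ is convex. I would prove this by constructing a natural cone operator $s\colon C^{\odot}_{m,n}(X)\to C^{\odot}_{m,n+1}(X)$ relative to a basepoint $x_0\in X$, defined on $f\colon I^k\times F_m\times I^{\ell}\to X$ by the straight-line homotopy $(t,z)\mapsto (1-t)x_0+t\,f(z)$ with the new parameter $t\in I$ inserted as a fresh cube coordinate, and then verifying the contracting identity $ds+sd=\mathrm{id}-\varepsilon$, where $\varepsilon$ is the augmentation onto the constant cell at $x_0$. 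The two cubical faces of the new coordinate recover $f$ at $t=1$ and the constant cell at $t=0$, producing the two principal terms; the remaining content is that the cubical operators $d^0_i,d^1_i$ in the old coordinates and, crucially, the mixed operators $d^2_i$ commute with $s$ up to the expected sign, which is forced by the structural identities \eqref{hohaxioms}. As a sanity check the case $X=\mathrm{pt}$ can be done by hand: after normalization only the cells of bidegree $(m,0)$ survive, the differential collapses to $(-1)^m d^0_m-\sum_{i=1}^m(-1)^i d^1_i$, and one computes $H_0=\Bbbk$ with all higher homology vanishing.

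With this acyclicity in hand the comparison is formal. Every modelling polytope is convex, so both functors are acyclic on the simplices $\{\Delta^n\}$ and on the products $\{I^k\times F_m\times I^{\ell}\}$; moreover $C_*$ is free with respect to $\{\Delta^n\}$ and $C^{\odot}_*$ is free with respect to $\{I^k\times F_m\times I^{\ell}\}$. The acyclic-models comparison theorem then furnishes, in each direction, a natural chain map lifting the identity on $H_0$ and unique up to natural chain homotopy; our geometric $g$ is such a map, and a reverse map $h\colon C^{\odot}_*\to C_*$ is supplied abstractly. Since $h g$ and $g h$ both lift the identity on $H_0$, uniqueness gives $h g\simeq\mathrm{id}$ and $g h\simeq\mathrm{id}$, so $g$ is a natural chain homotopy equivalence and $H_*(Y)\approx H^{\odot}_*(Y)$.

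Finally, for the cohomology algebra isomorphism I would dualize $g$. Because $g$ is a chain homotopy equivalence of degreewise free complexes, $g^*=\mathrm{Hom}(g,\Bbbk)\colon C^*_\odot(Y)\to C^*(Y)$ is again a chain homotopy equivalence over any $\Bbbk$, hence an isomorphism $H^*(Y)\approx H^*_\odot(Y)$ of the underlying modules. It is multiplicative because $\varphi$ intertwines the diagonal $\Delta_F$ of \eqref{diagonal} with the Alexander--Whitney diagonal of $\Delta^n$: this compatibility makes $g$ a map of dg coalgebras up to chain homotopy, so $g^*$ respects the cup products dual to the two diagonals, yielding the claimed algebra isomorphism. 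The main obstacle in this whole scheme is the construction and sign-bookkeeping of the cone operator $s$ establishing acyclicity on convex sets, and in particular controlling its interaction with the non-cubical faces $d^2_i$ through \eqref{hohaxioms}; once that lemma is secured, the homology equivalence is pure acyclic models and the algebra statement reduces to the already-recorded $\varphi$-compatibility of the diagonals.
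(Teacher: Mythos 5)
Your argument is correct, and in fact it supplies a proof where the paper gives none: the text merely exhibits the comparison map $\sigma\mapsto\sigma\circ\varphi$ induced by the cellular composition $I^k\times F_m\times I^{\ell}\to\Delta^{k+m+\ell}$ and then asserts the proposition, so the acyclic-models scheme you lay out (chain map check via Proposition \ref{Fcubi}, contracting cone operator on convex models, formal comparison, dualization together with the recorded compatibility of $\Delta_F$ with the Alexander--Whitney diagonal under $\varphi$) is exactly the standard justification the author leaves implicit. Two small points of hygiene: the restriction of $\varphi$ to a face $d^1_i(F_n)\cong F_{n-1}$ is not a homeomorphism onto $\partial_{i-1}\Delta^n$ but the healing map of that face, which is all you actually use in the identity $d^1_i(\sigma\circ\varphi)=(\partial_{i-1}\sigma)\circ\varphi$; and the acyclic-models comparison in the direction out of $C^{\odot}_*$ requires freeness of the normalized functor, which one gets in the usual way by working with the unnormalized complex of $\operatorname{Sing}^F Y$ (free on the models $I^k\times F_m\times I^{\ell}$) and checking, via the identities (\ref{hohaxioms}), that the degenerate chains form a natural acyclic subcomplex. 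Neither point is a genuine gap.
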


%\vspace{0.2in}
\section{Truncating twisting functions and bitwisted Cartesian products}

\subsection{The Cartier-Hochschild set $\mathbf{\Lambda} X$}

 Given
a 1-reduced simplicial set $(\!X\!,\partial_i,s_i)\!,$  i.e. $X=\{X_{0}=X_{1}=\{\ast
\},X_{2},X_{3},... \},$ recall the definition of
 a \emph{truncating twisting
function} \cite{KScubi}\,:

\begin{definition}
Let $X$ be a 1-reduced simplicial set and $Q$ be a monoidal cubical set. A sequence of
functions $\tau =\{\tau _{n}:X_{n}\rightarrow Q_{n-1}\}_{n\geq 1} $ of degree $-1$ is a
\underline{truncating twisting function} if it satisfies:
\begin{equation*}
\begin{array}{lll}
\hspace{0.14in} \tau(x) =e, &   x\in X_1,$\newline
$\vspace{1mm} \\
d^0_i \tau (x)= \tau \partial_{i+1}...\, \partial_{n}(x)\cdot \tau
\partial_{0}\,...\, \partial_{i-1}(x),  & x\in X_n, &  1\leq i<n, $
\newline
$\vspace{1mm} \\
d^1_i\tau (x)=\tau\partial_{i}(x), &  x\in X_n, & 1\leq i<n,$
\newline
$\vspace{1mm} \\
\eta_{n}\tau(x)=\tau s_n(x), &   x\in X_{n}, & n\geq 1 .
\end{array}
\end{equation*}
\end{definition}
A useful  characterization  of a truncating  twisting function is that the monoidal map
$f:{\mathbf \Omega} X\rightarrow Q$ defined by $f(\bar{x}_{1}\cdots \bar{x}_{k})=\tau
(x_{1})\cdots \tau (x_{k})$ is a map of cubical sets, where ${\mathbf \Omega} X$ is the
monoidal cubical set constructed in  \cite{KScubi} such that ${\mathbf \Omega} X$ is
related with $X$ by the universal truncating twisting function $\tau_{_{U}}:
X\rightarrow {\mathbf \Omega} X, $\, $x \rightarrow \bar x.$

\begin{definition}
\label{tmodule} Let $X=\{X_m,\partial_i,s_i\}$ be a 1-reduced simplicial set, $Q$ be a
monoidal cubical set, and $L=\{L_n,d^{\epsilon}_i,\eta_i\}$ be a $Q$-bimodule via
$Q\times L\rightarrow L$ and $L\times Q\rightarrow L.$ Let $\tau =\{\tau
_{k}:X_{k}\rightarrow Q_{k-1}\}_{k\geq 1}$ be a truncating twisting function. The
\underline{bitwisted Cartesian product $X\, _{\tau}\!\!\times _{\tau }L$} is the
bigraded set
\[X\, _{\tau}\!\!\times _{\tau }L=X\times L/_{\sim},\] where $(s_m(x),y)\sim
(x,\eta_1(y)),$ \,$(x,y)\in X_{m}\times L_{n},$ and
 endowed with the face $d_{i}^{0},d_{i}^{1},d^2_i$ and degeneracy $ \eta _{i}$
operators defined   by
\begin{equation}\label{F-structure}
\begin{array}{llll}
d^0_i (x,y ) =\left\{\!\!\!
\begin{array}{lll}
(\partial_{1}...\, \partial_{m}(x)\,, \tau (x)\!\cdot \!y) , &\ \ \ \  i=1, \\
( \partial_{i}...\, \partial_{m}(x)\,, \tau \partial_{0}...\,
\partial_{i-2}(x)\!\cdot\! y) , & \ \ \ \  1<i \leq m, \\
(x,d^0_{i-m}(y)), & \ \ \ \  m<i\leq m+n ,
\end{array}
\right.
 $\newline
$\vspace{0.1in}  \\
d^1_i ( x,y ) =\left\{\!\!
\begin{array}{lll}
(\partial _{i-1}(x),y), & \ \ \ \ \ \ \ \ \ \ \ \ \ \ \ \ \ \ \ \ \ \ \ \ \,
1\leq i\leq m, \\
( x, d^1_{i-m}(y)), & \ \ \ \ \ \ \ \ \ \ \ \ \ \ \ \ \ \ \ \ \ \ \ \ \,   m<i\leq m+n,
\end{array}
\right.
\vspace{1mm} \\
d^2_i(x,y)=\,\,\left(\, \partial_{0}...\, \partial_{i-1}(x)\,,\, y\!\cdot\! \tau
\partial_{i+1}... \,\partial_{m}(x)\right),\ \ \ \   1\leq i \leq m,
\\
\\
\, \eta_i(x , y)=\, \left(x,\eta_i(y)\right),\hspace{1.62in}   1\leq i\leq n+1.
\end{array}
\end{equation}
\end{definition}
Using Proposition \ref{Fcubi} it is easy to verify  that
  $(X\,_{\tau}\!\!\times _{\tau}L,\
d^0_i,d^1_i,d^2_i,\eta_i) $ forms an $F_n$-set. In particular,
$$d^2_1(x,y)=(\partial_0(x)\,,y\!\cdot\! \tau
\partial_2...\,\partial_{m}(x))=(\partial_0(x)\,,y\!\cdot\! e)=(\partial_0(x)\,,y)=d^1_1(x,y).$$

Take in the above definition $Q=L=\mathbf{\Omega} X ,$ $\tau=\tau_{_{U}},$ and the
bimodule actions inducing by the monoidal product on the cubical set $\mathbf{\Omega}
X$ to give the following
\begin{definition}
Given a 1-reduced simplicial set $X,$ \underline{the Cartier-Hochschild set
$\mathbf{\Lambda} X$} is the bitwisted Cartesian product $X\,
_{\tau\!_{_{U}}}\!\!\!\times_{\tau\!_{_{U}}} \mathbf{\Omega} X $ endowed with the
$F_n$-set structure via (\ref{F-structure}).
\end{definition}
\vspace{0.2in}
 \unitlength=1.00mm \special{em:linewidth 0.4pt} \linethickness{0.4pt}
\begin{picture}(118.32,90.00)
\put(65.33,74.67){\line(-1,0){24.33}} \put(41.00,74.67){\line(1,1){13.67}}
\put(41.00,74.67){\line(0,-1){22.33}} \put(41.00,52.34){\line(1,0){24.67}}
\put(65.33,74.67){\line(0,-1){22.33}} \put(55.33,88.34){\line(0,-1){12.67}}
\put(55.33,66.00){\line(1,0){9.00}} \put(2.00,74.34){\line(0,-1){22.00}}
\put(2.00,52.34){\line(1,1){13.67}} \put(15.67,66.00){\line(0,1){22.33}}
\put(41.00,16.34){\line(3,2){19.00}} \put(60.00,29.00){\line(3,-2){19.00}}
\put(79.00,16.34){\line(-3,-2){19.00}} \put(60.00,3.67){\line(-3,2){19.00}}
\put(41.00,16.34){\line(1,0){16.67}} \put(60.00,28.67){\line(0,-1){25.00}}
\put(61.67,16.34){\line(1,0){16.67}} \put(2.00,53.00){\circle*{1.33}}
\put(2.00,74.34){\circle*{1.33}} \put(15.67,65.67){\circle*{1.33}}
\put(41.00,74.67){\circle*{1.33}} \put(55.00,88.00){\circle*{1.33}}
\put(65.33,74.67){\circle*{1.49}} \put(55.33,66.00){\circle*{1.33}}
\put(65.33,52.34){\circle*{1.33}} \put(60.00,28.34){\circle*{1.33}}
\put(41.00,16.34){\circle*{1.33}} \put(78.67,16.34){\circle*{1.33}}
\put(60.00,4.34){\circle*{1.33}} \put(15.67,88.34){\circle*{1.33}}
\put(2.00,74.67){\line(1,1){13.67}} \put(41.00,52.34){\circle*{1.33}}

\put(20.00,70.67){\vector(1,0){15.67}}

\put(27.00,73.50){\makebox(0,0)[cc]{$\delta^0_1$}}

\put(98.33,79.67){\vector(-1,0){20.00}}

\put(90.00, 82.50){\makebox(0,0)[cc]{$\delta^2_3$}}

 \put(60.00,47.34){\vector(0,-1){13.00}}
\put(56.00,40.00){\makebox(0,0)[cc]{$\varphi$}}

\put(44.33,26.67){\vector(-1,1){29.00}}

\put(29.33,46.00){\makebox(0,0)[cc]{$\tau\!_{_{U}}$}}
\put(90.33,46.33){\makebox(0,0)[cc]{$\tau\!_{_{U}}$}}

\put(37.67,16.00){\makebox(0,0)[cc]{${0}$}}

\put(62.00,31.00){\makebox(0,0)[cc]{$3$}}

 \put(82.33,16.34){\makebox(0,0)[cc]{$2$}}

 \put(4.67,51.60){\makebox(0,0)[cc]{$0$}}

\put(5.00,74.33){\makebox(0,0)[cc]{$0$}}

\put(18.33,88.67){\makebox(0,0)[cc]{$0$}}

 \put(18.33,65.67){\makebox(0,0)[cc]{$0$}}

\put(41.00,49.34){\makebox(0,0)[cc]{$_{0}$}}
 \put(65.33,49.34){\makebox(0,0)[cc]{$_{1}$}}
\put(52.67,66.00){\makebox(0,0)[cc]{$_{0}$}}
 \put(52.33,88.00){\makebox(0,0)[cc]{$_{0}$}}
\put(64.67,76.67){\makebox(0,0)[cc]{$_{1}$}}
 \put(39.00,74.67){\makebox(0,0)[cc]{$_{0}$}}
\put(62.00,90.20){\makebox(0,0)[cc]{$_{3}$}}

\put(70.66,86.00){\makebox(0,0)[cc]{$_{3}$}}

 \put(67.46,79.40){\makebox(0,0)[cc]{$_{3}$}}

\put(76.66,76.33){\makebox(0,0)[cc]{$_{3}$}}

 \put(76.00,60.00){\makebox(0,0)[cc]{$_{2}$}}
\put(70.33,64.00){\makebox(0,0)[cc]{$_{2}$}}

 \put(77.01,6.67){\makebox(0,0)[cc]{$(0123)$}}
\put(9.33,69.00){\makebox(0,0)[cc]{$_{0][0123]}$}}

 \put(69.66,79.00){\line(-5,6){7.67}}
\put(62.00,88.00){\line(-1,0){6.67}} \put(69.66,79.00){\line(-1,-1){4.00}}
\put(70.66,66.00){\line(1,-1){4.00}} \put(74.66,62.00){\line(0,1){14.33}}
\put(69.66,79.00){\line(5,-3){5.00}} \put(62.00,88.00){\line(2,-1){8.33}}
\put(70.33,83.67){\line(3,-5){4.33}} \put(70.66,66.00){\line(0,1){11.67}}
\put(70.66,79.00){\line(0,1){4.33}} \put(66.00,66.00){\line(1,0){4.67}}
\put(74.66,62.33){\line(-5,-6){8.33}} \put(41.00,52.00){\line(1,1){14.33}}
\put(62.00,88.00){\circle*{1.33}} \put(70.66,83.67){\circle*{1.33}}
\put(69.33,79.00){\circle*{1.33}} \put(74.66,76.33){\circle*{1.33}}
\put(70.66,66.00){\circle*{1.33}} \put(74.66,62.00){\circle*{0.00}}
\put(74.66,62.00){\circle*{1.33}}

 \put(74.33,26.33){\vector(1,1){30.00}}

\put(110.66,70.33){\makebox(0,0)[cc]{$_{3][0123]}$}}
\put(77.33,54.67){\makebox(0,0)[cc]{$0123]$}}

 \put(103.00,65.67){\line(0,1){21.33}}
\put(103.00,87.00){\line(4,-3){14.67}} \put(117.66,55.00){\line(-4,3){14.67}}
\put(117.66,55.33){\line(0,1){20.67}} \put(117.66,55.33){\circle*{1.33}}
\put(103.00,65.67){\circle*{1.33}} \put(103.00,86.67){\circle*{0.00}}
\put(103.00,86.67){\circle*{1.33}} \put(117.66,76.00){\circle*{1.33}}

 \put(115.67,54.00){\makebox(0,0)[cc]{$3$}}
\put(100.33,65.66){\makebox(0,0)[cc]{$3$}} \put(100.33,86.66){\makebox(0,0)[cc]{$3$}}
\put(115.67,75.33){\makebox(0,0)[cc]{$3$}} \put(55.33,66.00){\line(0,1){8.00}}
\put(62.50,3.00){\makebox(0,0)[cc]{$1$}}
\end{picture}

\begin{center}{Figure 3. The two-fold interpretation of  the universal truncating
twisting \\ function $\tau\!_{_{U}}.$} \end{center}

\vspace{1mm}

\begin{remark}\label{PX}
1.  Note that  in the  Cartier-Hochschild  set $\mathbf{\Lambda} X$  one implies
 the identity  $d^0_1(x,e)=d^2_{m}(x,e)=\tau_{_{U}}(x)=\bar x   $
 for any simplex $x \in X_{m}.$

2. The operators $d^0, d^1$ just subject to the defining identities of a cubical set.

\end{remark}

\section{The bitwisted Cartesian model for the free loop  fibration}\label{free}

Let   $\Omega Y \rightarrow \Lambda Y\overset{\xi}{\rightarrow} Y$ be  the free loop
space fibration on a topological space $Y.$ Let ${\operatorname{Sing}}^{1}Y\subset
{\operatorname{Sing}}Y$ be the Eilenberg 1-subcomplex generated by the singular
simplices that send the 1-skeleton of the standard $n$-simplex $\Delta ^{n}$ to the
base point $y$ of $Y.$
 Denote by  $ C_*(Y)$
 the quotient coalgebra $C_*(\operatorname{Sing}^1 Y)/C_{>0}(\operatorname{Sing}\, y),$
 the  chain complex of $Y.$
Let $\operatorname{Sing}^I \Omega Y $ be the singular cubical set of $\Omega Y$ and
$C_*^{\Box}(\Omega Y)$ be the (normalized) chain complex of
  $\operatorname{Sing}^I \Omega Y. $
 Then Adams' map $\omega _{\ast }:\Omega C_{\ast }(Y)=C_{\ast
}({\mathbf\Omega} \operatorname{Sing} ^1 Y)\rightarrow C_{\ast }^{\Box }(\Omega Y)$ is
realized by a monoidal cubical map $\omega :{\bf\Omega} \operatorname{Sing} ^1 Y
\rightarrow {\operatorname{Sing}} ^{I}\Omega Y$ \cite{KScubi}.
 Obviously, we have  the  short sequence of singular $F_n$-sets
$$\operatorname{Sing} ^{F}   \Omega Y
\longrightarrow \operatorname{Sing} ^{F}  {\Lambda} Y
\overset{\xi_{_{\#}}}{\longrightarrow} \operatorname{Sing} ^{F}   Y. $$ On the other
hand, there is the short sequence of  sets
$$\mathbf{\Omega} \operatorname{Sing} ^1 Y \rightarrow  \mathbf{\Lambda}
\operatorname{Sing} ^1 Y \overset{p}{\rightarrow} \operatorname{Sing} ^1 Y$$ where the
maps are the natural inclusion and  projection respectively. Let $$\iota:
\operatorname{Sing} ^I \Omega Y \rightarrow \operatorname{Sing}^{F} \Omega Y$$ be the
inclusion determined via the identification  $ (\operatorname{Sing} ^I \Omega Y)_* =(
\operatorname{Sing}^{F} \Omega Y)^{0, *}.$

We have the following

\begin{theorem} \label{main}
Let
 $\Omega Y \rightarrow \Lambda Y\overset{\xi}{\longrightarrow} Y$
be the free loop fibration.

(i) There are natural maps of  sets $\tilde\varphi ,\Upsilon,\tilde\omega $ such that
\begin{equation}\label{CHmodel}
\begin{array}{ccccc}
\operatorname{Sing} ^{F}  \Omega Y \longrightarrow  &\operatorname{Sing} ^{F} \Lambda Y
  \overset{\xi_{_{\#}}}{\longrightarrow} & \operatorname{Sing} ^{F} Y \\
\tilde \omega \uparrow        &{\Upsilon}\uparrow            &\tilde\varphi \uparrow\\
 {\mathbf \Omega} \operatorname{Sing} ^1
Y \longrightarrow  &  \mathbf{\Lambda} \operatorname{Sing} ^1 Y
\overset{p}{\longrightarrow} & \operatorname{Sing} ^1 Y,
\end{array}
\end{equation}
$\tilde\varphi :{\operatorname{Sing}}^{1}Y\rightarrow {\operatorname{Sing}}^{F}Y$ is a
map  induced by the composition $I^k\times F_m\times I^{\ell}\overset{1\times\phi\times
1}{\longrightarrow}I^{k+m+\ell}\overset{\psi}{\rightarrow} \Delta^{k+m+\ell},$  while
$\Upsilon$ is a map of $F_n$-sets, and $\tilde \omega=\iota\circ \omega;$ moreover,
these maps are homotopy equivalences whenever $Y$ is simply connected.

(ii) The chain complex $C_{\ast }^{\odot}(\mathbf{\Lambda} \operatorname{Sing}^{1}Y)
 $ of the bitwisted Cartesian product $$\mathbf{\Lambda} \operatorname{Sing}^{1}Y=
 \operatorname{Sing}^{1}\!Y\,_{\tau_{_{U}}}\!\!\!\times_{\tau_{_{U}}}
 \mathbf {\Omega }\operatorname{Sing}^{1}\!Y$$
  coincides with the Cartier complex $\Lambda C_{\ast}(Y)$ of the  chain coalgebra
 $C_{\ast}(Y).$
\end{theorem}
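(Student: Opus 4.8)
The plan is to establish (ii) first, as it is the purely algebraic core, and then deduce (i) by combining the model of (ii) with Adams' cobar machinery and a comparison of fibrations.

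For (ii) I would first identify the underlying bigraded modules. Modulo the degeneracy relation $(s_m(x),y)\sim(x,\eta_1(y))$, the nondegenerate generators of $\mathbf{\Lambda}\operatorname{Sing}^1 Y$ in bidegree $(m,n)$ are the pairs $(x,y)$ with $x$ a nondegenerate $m$-simplex of $\operatorname{Sing}^1 Y$ and $y$ a nondegenerate $n$-cube of $\mathbf{\Omega}\operatorname{Sing}^1 Y$. Passing to normalized chains and invoking the identification $C^{\Box}_*(\mathbf{\Omega}\operatorname{Sing}^1 Y)=\Omega C_*(Y)$ from \cite{KScubi} yields a canonical isomorphism $C^{\odot}_*(\mathbf{\Lambda}\operatorname{Sing}^1 Y)\cong C_*(Y)\otimes\Omega C_*(Y)=\Lambda C_*(Y)$ of bigraded $\Bbbk$-modules. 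It then remains to show that the $F_n$-set differential $d_{m,n}=\sum_i(-1)^i(d^0_i-d^1_i)+\sum_{i\ge2}(-1)^{(i-1)(m+n)}d^2_i$ transports to $d_C\otimes1+1\otimes d_{\Omega C}+\theta_1+\theta_2$.

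The heart of (ii) is a term-by-term matching using the explicit formulas (\ref{F-structure}) and the truncating axioms, crucially $\tau(x)=e$ for $x\in X_1$ and $\tau=\tau_{_{U}}$, $\tau(x)=\bar x$. The faces $d^1_i$ for $1\le i\le m$ give $(\partial_{i-1}(x),y)$, hence $\partial_0,\dots,\partial_{m-1}$, while the top face $\partial_m$ is supplied by $d^0_m$, whose fibre factor is $\tau$ of the edge $[m-1,m]$, equal to the unit $e$; together these reproduce $d_C\otimes1$. The faces $d^0_i,d^1_i$ for $m<i\le m+n$ act only on the cube $y$ and, after the Koszul sign $(-1)^m$, reproduce $1\otimes d_{\Omega C}$ via the cobar differential. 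The faces $d^0_i$ for $1\le i\le m-1$ split $x$ by the Alexander--Whitney rule into a front $(i-1)$-face and a back face whose $\tau$-image is prepended to $y$, reproducing $\theta_1$ (the split at $m-1$ already counted above as $\partial_m$, since there $\tau$ is the unit). Dually $d^2_i$ for $2\le i\le m$ split $x$ into a back $(m-i)$-face and a front $i$-face appended to $y$ through $\tau$, reproducing $\theta_2$, while $d^2_1=d^1_1$ gives $\partial_0$ since $\tau[0,1]=e$. The only genuine labor is checking that the signs $(-1)^i$, $-(-1)^i$ and $(-1)^{(i-1)(m+n)}$ match the Koszul signs $-(-1)^{|v_1'|}$ and $(-1)^{(|v_2'|+1)(|v_2''|+\epsilon^c_n)}$ of $\theta_1,\theta_2$; using $|v_1'|=i-1$, $|v_2'|=i$, $|v_2''|=m-i$ this is a finite verification, and it is the main bookkeeping obstacle of part (ii).

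For (i) I would take $\tilde\varphi$ and $\tilde\omega=\iota\circ\omega$ to be the maps given by the stated cellular compositions, and construct $\Upsilon$ on a generator $(\sigma_m,\sigma'_n)$ by realizing $F_m$ as the indicated subdivision of $I^{m-1}\times I$ and sending a point of $I^k\times F_m\times I^\ell$ to the free loop assembled from $\sigma_m$ and $\sigma'_n$ through the conjugation action together with a fixed measuring homotopy for the commutativity of the loop product under $\Omega Y\subset\Lambda Y$. By design $\Upsilon$ restricts to $\tilde\omega$ on the fibre $m=0$ and satisfies $\xi_{\#}\Upsilon=\tilde\varphi\circ p$, so the remaining point is that $\Upsilon$ commutes with $d^0_i,d^1_i,d^2_i,\eta_i$; this I would read off directly from the inclusions $\delta^{\epsilon}_i$ and the structure maps (\ref{F-structure}), which is exactly what makes diagram (\ref{CHmodel}) commute. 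Finally, for simply connected $Y$ the map $\tilde\omega$ is a homotopy equivalence by Adams' theorem (the cobar equivalence realized cubically in \cite{KScubi}) and $\tilde\varphi$ is one by Proposition \ref{Fcoalgebra}; since the maps on base and fibre are equivalences and both rows model the free loop fibration, a comparison of the two Serre spectral sequences (or a fibrewise five-lemma) forces $\Upsilon$ to be a homotopy equivalence. I expect the principal obstacle of the whole theorem to lie here: producing $\Upsilon$ geometrically and verifying it is a well-defined morphism of $F_n$-sets; once that is in place the equivalence statement is a formal fibration comparison.
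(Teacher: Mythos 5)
Your overall architecture coincides with the paper's: part (ii) is proved by a direct term-by-term identification of the $F_n$-set differential of $\mathbf{\Lambda}\operatorname{Sing}^1 Y$ with $d_C\otimes 1+1\otimes d_{\Omega C}+\theta_1+\theta_2$ (the paper delegates everything except the matching $\theta_2\leftrightarrow d^2$ to the analogous identification $C_*^{\Box}(\mathbf{P}\operatorname{Sing}^1Y)=\Omega(C_*(Y);C_*(Y))$ in \cite{KScubi}), and part (i) is proved by an explicit geometric construction of $\Upsilon$ followed by a spectral-sequence comparison over the equivalences $\tilde\varphi$ and $\tilde\omega$. Your reading of the face operators in (ii) --- $d^1_i$ and $d^0_m$ assembling $d_C\otimes 1$ because $\tau$ kills edges, $d^0_i$ for $i<m$ giving $\theta_1$, $d^2_i$ giving $\theta_2$ --- is exactly right and is the intended argument.

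The genuine gap is in the construction of $\Upsilon$, which is precisely where the paper concentrates its effort. ``The free loop assembled from $\sigma_m$ and $\sigma'_n$ through the conjugation action together with a fixed measuring homotopy'' is a statement of intent, not a map: a pair $(\sigma_m,\sigma'_n)$ supplies a simplex of the base and loop data, and to produce a continuous map $F_m\times I^n\to\Lambda Y$ you must choose, continuously over the base, a free loop for the loop data to act on --- i.e.\ you need a section of $\xi$. The paper resolves this by replacing $\Lambda Y$ with $\Lambda BG=(EG\times G)/_{\sim}$ for $G\simeq\Omega Y$, fixing a section $s:Y\to\Lambda BG$ with a set-theoretic lift $s'$ to $EG\times G$, and setting $f(u,v)=\pi\left(x_u\cdot h(u,v),\, y_u\cdot \zeta(u,v)\right)$, where $\zeta$ is built from $\tilde\omega(\bar\sigma_m\cdot\sigma'_n)$ and $h$ from $\tilde\omega(\bar\sigma_m)$ composed with a contracting homotopy. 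The point you cannot simply ``read off from the inclusions $\delta^{\epsilon}_i$'' is the $d^2_i$-compatibility: in the bitwisted product $d^2_i$ appends $\tau$ of a \emph{front} face of $\sigma_m$ on the \emph{right} of the fibre element while $d^0_i$ prepends a back face on the left, and realizing both simultaneously by a single geometric map is exactly what the relation $(xa,ab)\sim(x,ba)$ in $(EG\times G)/_{\sim}$ is for. Without this (or an equivalent) model the verification that $\Upsilon$ is a morphism of $F_n$-sets --- the step you yourself flag as the principal obstacle --- does not go through; the remaining pieces (the equivalence via spectral-sequence comparison, the sign bookkeeping in (ii)) are as in the paper.
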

\begin{proof}
To define the map $\Upsilon :\mathbf{\Lambda} \operatorname{Sing}^1Y \rightarrow
\operatorname{Sing}^{F} \Lambda Y,$ it is convenient to apply a homotopically
equivalent description of the free loop fibration $\xi$ (cf. \cite{McCleary}). Namely,
$\xi$ is thought of as the associated fibre bundle with the universal bundle
$G\rightarrow EG\rightarrow BG$ via the conjugation action $G\times G\rightarrow G,\
(a,b)\rightarrow a^{-1}ba, $ where $G$ has the homotopy type of  $\Omega Y$ and let
$\pi:EG\times G\rightarrow (EG\times G)/_{\sim}= \Lambda BG$ be the quotient map.

Fix a section $s:Y\rightarrow \Lambda BG.$ Choose its factorization $s: Y
\overset{s'}{\rightarrow} EG\times G\overset{\pi}{\rightarrow} \Lambda BG$ ($s'$ does
not need to be continuous). Fix a homotopy $\chi_{k}:I^k\times I\rightarrow I^k$
contracting $I^k$ at the minimal vertex. Let $\rho: F_m\rightarrow I^{m}=I^{m-1}\times
I$ be a cellular projection obtained
 by dilatation of the face $d^2_m(F_m)$ to
$d^1_{1}(I^m).$ Given $(\sigma_m,\sigma'_n)\in (\mathbf{\Lambda}
\operatorname{Sing}^1Y)^{m,n},$ let $\Upsilon(\sigma_m,\sigma'_n)\in
(\operatorname{Sing}^{F}Y)^{m,n}$ be a  map $f:F_m\times I^n\rightarrow \Lambda BG$
defined  as follows: Consider the compositions
\[
\begin{array}{lll}
\zeta: F_m\times I^n\overset{\rho\times 1}{\longrightarrow} I^{m-1}\times I \times I^n
 \overset{p_1}{\longrightarrow}
 I^{m-1}\times I^n\overset{\tilde\omega (\bar \sigma_m\!\cdot \sigma'_n)}{\longrightarrow}
 G, \\
h: F_m\times I^n\overset{\rho\times 1}{\longrightarrow} I^{m-1}\times I \times I^n
 \overset{p_2}{\longrightarrow}
 I^{m-1}\times I\overset{\chi_{m\!-\!1}}{\longrightarrow} I^{m-1}
 \overset{\tilde\omega (\bar \sigma_m)}{\longrightarrow}
 G,
 \end{array}
\]
  where $p_1$ and $p_2$ are
 canonical projections.
 Given $(u,v)\in F_m\times I^n,$ let
\begin{equation*}\label{action}
f(u,v)= \pi\left(x_u\!\cdot\!h(u,v) \,,\, y_u\!\cdot\! \zeta(u,v)\right),
\end{equation*}
 where $(x_u,y_u)= (\varphi\circ \sigma_m\circ s')(u)\in EG\times G. $
 Since $ \varphi\circ \sigma_m\circ s: F_m\rightarrow \Lambda BG$ is continuous,
 it is easy to verify that $f$ will be continuous as well.
The relation $(xa,ab)\sim (x,ba)$ in $(EG\times G)/_{\sim}$ guarantees the
 compatibility  $d^2_i(\sigma_m,\sigma'_n)$ with $d^2_i(f)$  under $\Upsilon.$
Thus, for $m=0,1$  the map $\Upsilon (\sigma_m ,e):F_m \to \Lambda BG$
 is constant to
the base point $\lambda_0=\pi(x_0,e)$, where $e$ denotes
 the unit of the monoid $\operatorname{Sing} ^I \Omega Y$ too, and $x_0$ is the base
 point of $EG.$ On the other hand,  we have $\Upsilon (\sigma_0 ,\sigma'_n)
 :I^n \overset{\tilde\omega
 (\sigma'_n\!)} {\longrightarrow} G \hookrightarrow \Lambda BG$ for $n\geq 0.$

The proof of $\Upsilon$  being a homotopy equivalence (after the geometric realization)
immediately follows, for example, from the comparison of the standard spectral
sequences for $\xi$ and $p$ using the fact that $\tilde \omega_*$ and $\tilde
\varphi_*$ are homology isomorphisms.

(ii) The proof is straightforward as the proof of the identification isomorphism
$C_{\ast }^{\Box }({\bf P} \operatorname{Sing}^{1}Y) =\Omega \left(C_{\ast }(Y);C_{\ast
}(Y)\right)$ in \cite{KScubi}. Only we remark that the differential of $\Lambda C_{\ast
}(Y)$ differs from the one of the above acyclic cobar construction by the component
$\theta_2$ that agrees with the  component  $d^2$ of the differential $d$ of the chain
complex $C_{\ast }^{\odot}(\mathbf{\Lambda} \operatorname{Sing}^{1}Y)$ under the
required identification here.
 \end{proof}

Thus, by passing to chain complexes in diagram (\ref{CHmodel}) we obtain the following
comultiplicative model of the free loop fibration $\xi$ formed by dgc's.

\begin{theorem}\label{hohmodel}

 For the free loop space fibration
 $\Omega Y \rightarrow \Lambda Y\overset{\xi}{\rightarrow} Y$
there is a comultiplicative model formed by dgc's which is natural in $Y:$
\begin{equation}
\begin{array}{ccccc}
C^{\odot}_*(  \Omega Y) &\longrightarrow & C^{\odot}_*( \Lambda Y)&
\overset{\xi_*}{\longrightarrow}
 & C^{\odot}_*( Y )   \\
\tilde \omega_* \uparrow  & &\Upsilon_* \uparrow    &  &\tilde \varphi_{*} \uparrow  \\
\Omega C_*( Y) & \longrightarrow  &\Lambda C_*( Y ) & \overset{p_*}{\longrightarrow} &
C_*( Y).
\end{array}
\end{equation}
\end{theorem}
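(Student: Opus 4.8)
The plan is to obtain Theorem \ref{hohmodel} by applying the normalized chain functor $C_*^{\odot}$ to the diagram (\ref{CHmodel}) of Theorem \ref{main}, so that the essential points to verify are (a) that $C_*^{\odot}$ carries the objects and arrows of (\ref{CHmodel}) into chain complexes equipped with the diagonal $\Delta_F$ and comultiplicative chain maps, and (b) that it converts the bottom row into the cobar/Cartier row displayed in the statement.

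First I would record the functoriality of $C_*^{\odot}$ on $F_n$-sets. Both the differential $d_{m,n}$ of the normalized chain complex of an $F_n$-set and the diagonal (\ref{diagonal}) are written entirely in terms of the face operators $d^0_i,d^1_i,d^2_i$; hence any morphism of $F_n$-sets---being a family of maps commuting with all faces and degeneracies---induces a chain map that commutes strictly with the comultiplications $\Delta_F$. The analogous statement holds for the cubical slices via the Serre diagonal, and for the simplicial input it is exactly the content of Proposition \ref{Fcoalgebra}. Applied to (\ref{CHmodel}): the arrow $\Upsilon$ is a map of $F_n$-sets, $\tilde\varphi$ is the map induced by the cellular composition of Proposition \ref{Fcoalgebra}, and $\tilde\omega=\iota\circ\omega$ is a cubical map followed by the inclusion $\iota$. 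Therefore $\Upsilon_*$, $\tilde\varphi_*$, $\tilde\omega_*$ are comultiplicative chain maps, and each square remains commutative after passage to chains because it already commuted at the level of sets. The homotopy-equivalence assertions of Theorem \ref{main}(i) transfer to quasi-isomorphisms (for simply connected $Y$), which is what makes the diagram a model.

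It then remains to identify the bottom row. By Theorem \ref{main}(ii) the middle term $C_*^{\odot}(\mathbf\Lambda\operatorname{Sing}^1Y)$ is the Cartier complex $\Lambda C_*(Y)$; the left term $C_*(\mathbf\Omega\operatorname{Sing}^1Y)$ is Adams' cobar construction $\Omega C_*(Y)$ (the identification used in the proof of part (ii)), and the right term reduces to $C_*(Y)$ because the $F_n$-structure carried by $\operatorname{Sing}^1Y$ lives in its simplicial part. Substituting these identifications into the image of (\ref{CHmodel}) produces exactly the stated square of dgc's, and naturality in $Y$ is immediate, since $\operatorname{Sing}$, $\mathbf\Omega$, $\mathbf\Lambda$ and the three vertical maps are all natural in $Y$.

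The step I expect to be the real obstacle is not the commutativity but the reading of the word ``dgc.'' As observed right after (\ref{diagonal}), $\Delta_F$ is not coassociative, so each term must be understood as a comultiplicative (rather than strictly coassociative) complex; what must then be checked with some care is that the diagonal induced on the middle term $\Lambda C_*(Y)$ by the $F_n$-set structure of $\mathbf\Lambda\operatorname{Sing}^1Y$ coincides with the comultiplication one would assign to the Cartier complex directly. This amounts to a short but genuine bookkeeping with formula (\ref{diagonal}), which---once Theorem \ref{main}(ii) has matched the underlying chain complexes---is forced by the functoriality established above.
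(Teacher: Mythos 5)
Your proposal is correct and follows the same route the paper takes: the paper derives Theorem \ref{hohmodel} simply "by passing to chain complexes in diagram (\ref{CHmodel})," relying on exactly the functoriality of $C_*^{\odot}$ on $F_n$-set morphisms and the identifications from Theorem \ref{main}(ii) that you spell out. Your additional remarks on the non-coassociativity of $\Delta_F$ and the matching of the induced diagonal with the Cartier comultiplication are consistent with the paper's own comments and fill in details the paper leaves implicit.
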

Obviously, one obtains the dual  statement for cochain complexes involving the
Hochschild dg algebra $\Lambda C^*(Y).$

\subsection{The canonical homotopy $G$-algebra structure on $C^*(X)$}
Recall that there exists a canonical hga structure $\{E_{k,1}:C^*(X)^{\otimes k}\otimes
C^*(X)\rightarrow C^*(X)\}_{k\geq 0}$ on the simplicial cochain algebra $C^*(X)$
\cite{Baues3}, \cite{Getz-Jones}, \cite{KScubi} which, in particular, defines an
associative multiplication $\mu_E$ on the bar construction $BC^*(X).$ It is convenient
to view these operations as the dual of the cooperations $E^{k,1}$ on the simplicial
chain coalgebra $C_*(X).$ In turn, these cooperations can be obtained by a
combinatorial analysis of the diagonal of $I^n:$
\begin{multline}\label{combiserre}
\Delta \lbrack 0,1,...,n+1]=  \Sigma (-1)^{\epsilon }\
[0,1,...,j_{1}][j_{1},...,j_{2}][j_{2},...,j_{3}]...[j_{p},...,n+1]\otimes
\\
 [0,j_{1},j_{2},...,j_{p},n+1].
\end{multline}
where the summands $[01...n+1]\otimes \lbrack 0,n+1]$ and $
[01][12][23]...[n,n+1]\otimes \lbrack 01...n+1]$ form the primitive part of the
diagonal.

Regard the blocks of natural numbers above as faces of the standard $ (n+1)$-simplex
and discard the expression of the form $[j,j+1] $ to obtain Baues' formula for a
1-reduced simplicial set $X$ and  a generator $\sigma \in C_{n+1}(X):$
\begin{multline}\label{cohga}
E^{k,1} ( \sigma )= \Sigma (-1)^{\epsilon }\left( \sigma (0,1,\!...,j_{s_1})\!\otimes
\!\sigma (j_{s_2},\!...,j_{s_3})\!\otimes \!\cdots \!\otimes\!\sigma
(j_{s_k},...,n+1)\right)\!\otimes\!  \\
 \sigma (0,j_{1},j_{2},...,j_{p},n+1),
\end{multline}
where $\sigma (i_{1},...,i_{r})$ denotes the suitable face of $\sigma $ (i.e.
\!$[i_{1},...,i_{r}]=\tau_{_{U}}\sigma (i_{1},...,i_{r})$) and  $k\leq p.$

Now let $\lambda_E$ denote an inducing  multiplication   by (\ref{diagonal})  on the
complex $\Lambda C^*(X).$ Apply formulas
(\ref{combidiagonal}),\,(\ref{combiserre}),\,(\ref{cohga}) to write down $\lambda_E$ in
terms of operations $E_{k,1}.$  Namely,
 given two elements $u\otimes [\bar a_1|\dotsb |\bar a_n]$  and $v\otimes [\bar
b_1|\dotsm |\bar b_m]$ in $\Lambda  C^{*}(X),$  we get
\newpage
\begin{multline}\label{formula}
\lambda_{E}\left(\left(u\otimes [\bar a_1|\dotsm |\bar a_m]\right)\otimes
\left(v\otimes [\bar b_1|\dotsm |\bar b_n]\,\right)\right)= \\
 \sum _{p=0}^{m}\,
 (-1)^{\varepsilon_1}\,
u\!\cdot\!E_{p,1 }( a_1,\!... , a_p; v) \otimes \mu_E\!\left( [\bar a_{p+1
}|\dotsb |\bar a_m] \otimes [\bar b_1|\dotsb |\bar b_n]\right)+\\
 \sum_{0\leq i\leq j\leq k\leq m}(-1)^{\varepsilon_2}
 E_{m+i+1-k,1}( a_{k+1}
, \!... , a_m, u, a_{1},\! ..., a_i; b_n)\!\cdot\!
    E_{j-i,1}( a_{i+1}, \!... , a_j\,; v)
\otimes\\
\hspace{2.7in}
 \mu_E \!\left([\bar a_{j+1}|\dotsb
|\bar a_k]\otimes [\bar b_1|\dotsb |\bar
 b_{n-1}]\right),\\
\begin{array}{rll}
&\varepsilon _1 =\epsilon^a_p+(\epsilon^a_{p}+\epsilon^a_m)|v|,\\
&\varepsilon _2 = \epsilon^a_{m}+ (|u|+\epsilon^a_{k})(\epsilon^a_{k}+\epsilon^a_{m}) +
 (|v|+\epsilon^{b}_{n-1})(|b_n|+1)+  (\epsilon^a_{j}+\epsilon^a_{k}) (|v|+1),
\end{array}\\
 \text{where}\  \epsilon^x_{r}=|x_{1}|+\cdots + |x_{r}|+r.
\end{multline}

\begin{remark}
The first summand component of (\ref{formula}) agrees with (13) in \cite{KScubi} up to
signs: T he sign component $\epsilon^a_p$  correctly shown above  is omitted in
\cite{KScubi}.
\end{remark}
Thus, formula (\ref{formula}) gives the product on $\Lambda C^*(Y)$ by setting
$X=\operatorname{Sing}^1Y.$

\noindent For example, for $m=n=1$ we have (up to signs):
\begin{multline}\label{simpleproduct}
\lambda_E\left(\left( u\otimes [\,\bar a\,] \,\right)\otimes \left(v\otimes [\,\bar
b\,]\right)\right)= u\!\cdot\!v\otimes \left([\,\bar a\,|\,\bar b\,] +
 [\,\bar b\,|\,\bar a\,]+  E_{1,1}(a; b)\right)+\\
\hspace{2.0in} u\!\cdot\!E_{1,1}(a; v)\otimes [\,\bar b\,]+
  E_{1,1}(u;b)\!\cdot\!v\otimes [\bar a]+\\
  \left(E_{1,1}(u; b)\!\cdot\!E_{1,1}(a; v)+
   E_{2,1}(u, a; b)\!\cdot\!v+
  E_{2,1}(a, u;b)\!\cdot\!v \right)\otimes [\ ].
\end{multline}
Note that \cite{KScubi} $E_{1,1}$ is in fact Steenrod's original definition of the
 cochain $\smile_1$
operation. It satisfies the following Hirsch formula
$$  c \smile_1 a\!\cdot\!b=(c\smile _1a)\!\cdot\!b+(-1)^{|a|(|c|+1)}a\!\cdot\!(c\smile_1 b) $$
saying that $\smile_1$ is the left derivation with respect to the $\cdot$ (cup) product
on $C^*(Y).$ On the other hand, the map $-\!\!\smile _{1}\! c:C^*(Y)\rightarrow C^*(Y)$
is a derivation only {\it up to homotopy} with the operation $E_{2,1}$ serving as a
suitable homotopy:
\begin{multline*}
 dE_{2,1}(a,b\,;c)=
E_{2,1}(da,b\,;c)-(-1)^{|a|}E_{2,1}(a,db\,;c)+ (-1)^{|a|+|b|}E_{2,1}(a,b\,;dc)-\\
\hspace{1.1in}
 (-1)^{|a|}a\cdot b\smile_1\! c +(-1)^{|a|+|b|+ |b||c|}
(a\smile_1\!c)\cdot b + (-1)^{|a|}a\cdot(b\smile_1\! c),
\end{multline*}
the  \emph{Hirsch formula up to homotopy}. In the next subsection we point out the
other role of the operation $E_{2,1}.$
\subsection{Interaction between the Stasheff and Gerstenhaber higher order operations
on $\Lambda C^*(Y)$}\label{interaction}

Since the diagonal $\Delta_F$ is not coassociative,  the multiplication $\lambda_E$ on
the Hochschild chain complex $\Lambda C^*(Y)$ is not associative; but Theorem
\ref{main}(ii) and the acyclicity of $F_m\times I^n$ guarantees the existence of an
$A_{\infty}$-algebra structure on $\Lambda C^*(Y).$ Since  formula  (\ref{diagonal}),
it is expected to construct this structure precisely. Indeed, denoting $A=\Lambda
C^*(Y)$ and $x,y,z\in A$ with $x= u\otimes [\ ], y=v\otimes [\ ], z= 1\otimes [\,\bar
b\,], \, du=dv=db=0,$
  apply  (\ref{simpleproduct}) to obtain the following
  $\lambda_E$ products in $A$
  (up to sign):
  \[
  \begin{array}{lll}
  (xy)z=u\!\cdot \!v\otimes [\,\bar b\,]  +(u\!\cdot \!v\smile_1 v)\otimes [\ ]
  \ \ \ \text{and}\\
 x(yz)=u\!\cdot \!v\otimes [\,\bar b\,]  +
 \left((u\smile_1 b)\!\cdot \!v +u\!\cdot \!(v\smile_1 b)\right)
   \otimes [\ ].
\end{array}
   \]
  From
    the Hirsch formula up to homotopy immediately follows the equality
$$ d\left(E_{2,1}(u,v\,;b)\otimes [\ ] \right)= (xy)z-x(yz) .$$
Consequently, an operation $\varphi^3: A^{\otimes 3}\rightarrow A$ can be chosen with
$$\varphi^3\left(u\otimes [\ ]\,, \,v\otimes [\ ]\,,\,1\otimes [\,\bar b\,]\,\right)
=E_{2,1}(u,v\,;b)\otimes [\ ].$$

Note that the operation $E_{2,1}$ is unavoidable on the \emph{simplicial} cochain
algebra $C^*(Y)$ (i.e. there exists no  strict Hirsch formula for the both sides
simultaneously), and, consequently, its Hochschild chain algebra becomes a natural
occurring and simplest example in which there is a non-trivial $A_{\infty}$-algebra
structure on the chain level.

\section{Proof of Theorem 1}

Recall the definition of an hga $(A,d,\cdot\,,\{E_{p,q}\})_{p\geq 0,\,q=0,1},$ in
general \cite{Gerst-Voron}, \cite{Getz-Jones},  \cite{KScubi}. Given $k\geq 1,$ we have
the following defining identities for it:
\begin{equation}\label{dif}
\begin{array}{llll}
dE_{k,1}(a_1,...,a_k;b)&= & \sum _{i=1}^{k}\ \,\, (-1)^{\epsilon^a_{i-\!1}}
  \, E_{k,1}(a_1,..., da_i,...,a_k;b)\\  &  & \ \ \ \ +\ \
\,(-1)^{\epsilon^a_{k}}\ \ \  E_{k,1}(a_1,...,a_k;db)\\
 &  &\!\!\!\!\!\! +\sum_{i=1}^{k-1} \,
\,\,(-1)^{\epsilon^a_{i}}\ \ \ E_{k-1,1}(a_1,...,a_{i}a_{i+1},...,a_k;b) \\ & & \ \ \ \
+
 \ \ \,
 (-1)^{\epsilon^a_k+|a_k|\!|b|} \, E_{k-1,1}(a_1,...,a_{k-1};b)\!\cdot\! a_k\\ & & \ \ \ \ +  \
\ \, (-1)^{|a_1|}\ \ \ \ \ \ \   a_1\!\cdot\! E_{k-1,1}(a_2,...,a_k;b),
  \end{array}
\end{equation}
\begin{multline}\label{prod}
 E_{k,1}(a_1,\!...,a_k;b\cdot c)\\=
\sum_{i=0}^{k}(-1)^{|b| (\epsilon^a _{i}+\epsilon^a
_{k})}E_{i,1}(a_1,\!...,a_{i};b)\cdot E_{k-i,1}(a_{i+1},\!...,a_k;c)
\end{multline}
and
\begin{multline}\label{associativity}
 \sum _{\substack{_{k_1+\cdots+k_p=k}
 \\ _{1\leq p\leq k+\ell}}}
 (-1)^{\epsilon}
 E_{p,1}
\left(E_{k_1,\ell_1}(a_1,...,a_{k_1};b'_1),\!
...,E_{k_p,\ell_p}(a_{_{k-k_{p}+1}},...,a_k;b'_{p} )\,; c \right)
\\=
 E_{k,1}\left(a_1,...,a_k;E_{{\ell},1}(b_1,...,b_{\ell};c)\right),\\
b'_i\in \{1,b_1,..,b_{\ell}\},\ \  \
\epsilon=\sum_{i=1}^{p}(|b'_i|+1)(\varepsilon^a_{k_i}+\varepsilon^a_{k}),b'_i\neq 1.
\end{multline}

A \emph{morphism} $f:A\rightarrow A'$ between two hga's is a dga map $f$ commuting with
all $E_{p,q}.$ Obviously, formula (\ref{formula}) has a sense for the Hochschild chain
complex of an arbitrary  hga and then $f$ induces a dga map $\Lambda f: \Lambda
A\rightarrow \Lambda A', $ and, consequently, an algebra map  $ HH(f): HH(
A)\rightarrow HH( A'). $ However, in the  lemma below $f$ is not necessarily an  hga
map, nevertheless it induces an algebra map on the Hochschild homologies.

We  need the following lemma in which  for simplicity  the subscripts are removed for
the operations $E_{k,1}.$

\begin{lemma}\label{ederivation}
Let $A,A'$ be two $\Bbbk$-free  hga's and let  $f:A\rightarrow A'$ be a dga map such
that there is a sequence of maps $s=\{s_{k,1}:A^{\otimes k+1}\rightarrow A'\}_{k\geq 1}
$ with
$$fE_{k,1}-E'_{k,1}f^{\otimes k+1}=-(s_{k,1}+s_{k-1,1})D-d's_{k,1},\ \  \
  D:A^{\otimes k+1}\rightarrow
A^{\otimes k+1}\oplus A^{\otimes k},$$
 where $D(a_1,\!...,a_k;b)=\sum _{i=1}^{k}
(-1)^{\epsilon^a_{i-1}}
   (a_1,\!..., da_i,\!...,a_k;b)+
(-1)^{\epsilon^a_{k}} (a_1,\!...,a_k;db)
  + \sum_{i=1}^{k-1}
(-1)^{\epsilon^a_{i}} (a_1,\!...,a_{i}a_{i+1},\!...,a_k;b) ,$
 and
\begin{multline}\label{decomp}
 s(a_1,...,a_k;b\!\cdot\!c)\!=\!
  \sum_{i=0}^{n-1}
  (-1)^{(|b|\!+\!1) \epsilon^a _{i}+|b|(\epsilon^a_{k}\!+\!1)}
  E'(fa_1,...fa_i;fb)\!\cdot\!s(a_{i+1},...,a_k;c)\\
   +\sum_{j=1}^{n}  (-1)^{|b| (\epsilon^a _{j}+\epsilon^a
_{k})} s(a_1,...a_j;b)\!\cdot\!fE(a_{j+1},...,a_k;c),
\end{multline}
 and let $\mathbf{s}:BA\otimes BA\rightarrow BA'$ be the extension
 of a map $\bar{s}:BA\otimes BA\rightarrow A',$
$$
\bar {s}( [\bar a_1|\dotsb |\bar a_n]\otimes  [\bar b_1|\dotsm |\bar b_m])=\left \{
\begin{array}{lll}
s(a_1\otimes \cdots \otimes a_n\otimes b_1), & m=1\\
0, & \text{otherwise},
\end{array}
\right.
 $$
as a $(\mu_{E'}\circ(Bf\otimes Bf)\, ,Bf\circ \mu_E)$-coderivation. Then a map
$$\chi:\Lambda  A\otimes \Lambda A\rightarrow \Lambda A'$$ defined for
 $(u\otimes [\bar a_1|\dotsb |\bar a_n])\otimes (v\otimes [\bar
b_1|\dotsm |\bar b_m])\in \Lambda  A\otimes \Lambda A$ by
\begin{multline*}
\chi \left(\left(u\otimes [\bar a_1|\dotsm |\bar a_m]\right)\otimes
\left(v\otimes [\bar b_1|\dotsm |\bar b_n]\,\right)\right) \\
 =\sum _{p=0}^{m}\,
 (-1)^{\nu_1+|u|+\epsilon^a_{p}+|v|}\,
u\!\cdot\!fE( a_1,\!... , a_p; v) \otimes \mathbf{s}\left( [\bar a_{p+1
}|\dotsb |\bar a_m] \otimes [\bar b_1|\dotsb |\bar b_n]\right)\\
\hspace{-2.3in}+\sum _{p=0}^{m}\,
 (-1)^{\nu_1+|u|}\,
u\cdot\!s( a_1,\!... , a_p; v) \\
\hspace{1.5in}\otimes \left(\mu_{E'}\circ (Bf\otimes Bf)\right)\!\left( [\bar a_{p+1
}|\dotsb |\bar a_m] \otimes [\bar b_1|\dotsb |\bar b_n]\right)\\
\\
 \hspace{-0.2in}+
  \sum_{\substack{0\leq i\leq j\\ \leq k\leq m}}
  (-1)^{\nu_2+|u|+|b_n|+|v|+\epsilon^a_j+\epsilon^a_k+\epsilon^a_m}
 fE
 ( a_{k+1}, \!... , a_m, u, a_{1},\! ..., a_i; b_n)\\
\hspace{1.3in} \cdot
    fE( a_{i+1}, \!... , a_j ; v)
 \otimes\, \mathbf{s}\left([\bar a_{j+1}|\dotsb
|\bar a_k]\otimes [\bar b_1|\dotsb |\bar b_{n-1}]\right)\\
 \hspace{-0.35in}
 +
 \sum_{\substack{0\leq i\leq j\\ \leq k\leq m}}
   (-1)^{\nu_2+|u|+|b_n|+\epsilon^a_i+
 \epsilon^a_k+\epsilon^a_m}
 fE
 ( a_{k+1}, \!... , a_m, u, a_{1},\! ..., a_i; b_n)
 \\
 \ \ \ \ \ \ \ \
\cdot    s( a_{i+1}, \!... , a_j ; v)
 \otimes\, (\mu_{E'}\circ(Bf\otimes Bf)) \!\left([\bar a_{j+1}|\dotsb
|\bar a_k]\otimes [\bar b_1|\dotsb |\bar b_{n-1}]\right)\\
 \hspace{-0.3in}+
 \sum_{\substack{0\leq i\leq j\\ \leq k\leq m}}
  (-1)^{\nu_2}\,
 s( a_{k+1}
, \!... , a_m, u, a_{1},\! ..., a_i; b_n)\cdot\!
    E'(f a_{i+1}, \!... , fa_j ; fv)\\
\hspace{1.55in}
 \otimes\, (\mu_{E'}\circ(Bf\otimes Bf)) \!
\left([\bar a_{j+1}|\dotsb|\bar a_k]\otimes [\bar b_1|\dotsb |\bar b_{n-1}]\right)
\end{multline*}
is a chain homotopy between $\Lambda f\circ \lambda_E$ and
 $\lambda_{E'} \circ(\Lambda f\otimes \Lambda f).$

\end{lemma}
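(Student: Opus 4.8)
The plan is to verify directly that $\chi$ realizes a chain homotopy
\[\Lambda f\circ\lambda_E-\lambda_{E'}\circ(\Lambda f\otimes\Lambda f)=d'\chi+\chi d,\]
where $d$ is the tensor differential on $\Lambda A\otimes\Lambda A$, $d'$ that of $\Lambda A'$, and the overall sign is to be fixed in the bookkeeping below. The guiding principle is that $\chi$ is precisely the defining formula (\ref{formula}) for $\lambda_E$ with its two structural ingredients — the hga operation $E$ occupying the ground slot and the bar multiplication $\mu_E$ (transported by $Bf$) occupying the bar slot — replaced, \emph{one occurrence at a time}, by the homotopies $s$ and $\mathbf{s}$ that measure the failure of $f$ to commute with them. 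Accordingly the five summands of $\chi$ split along the two summands of (\ref{formula}): summands one and two arise from the first summand of (\ref{formula}) by inserting the homotopy in the bar factor (giving $\mathbf{s}$) or in the ground factor (giving $s$), and summands three–five arise from the second summand of (\ref{formula}) by the analogous insertions. Telescoping these insertions is what converts $d'\chi+\chi d$ into the difference of the two composites.

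First I would isolate the \emph{bar-level homotopy}. Since $\mathbf{s}$ is by construction the $(\mu_{E'}\circ(Bf\otimes Bf),\,Bf\circ\mu_E)$-coderivation extending $\bar s$, I claim
\[Bf\circ\mu_E-\mu_{E'}\circ(Bf\otimes Bf)=d_{BA'}\,\mathbf{s}+\mathbf{s}\,(d_{BA}\otimes 1+1\otimes d_{BA}).\]
The difference of two dg-coalgebra maps $g_1,g_2$ is a $(g_1,g_2)$-coderivation, and so is $d_{BA'}\mathbf{s}+\mathbf{s}d$; hence both sides of the displayed identity are $(\mu_{E'}(Bf\otimes Bf),\,Bf\mu_E)$-coderivations and are therefore determined by their components along the cogenerators $A'$. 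I would then check those components agree: the cogenerator component of the left-hand side is read off from the expressions for $\mu_E,\mu_{E'}$ in terms of $E,E'$, while that of the right-hand side reduces to $\bar s=s$ by the coderivation property, and the two match by the defining relation $fE_{k,1}-E'_{k,1}f^{\otimes k+1}=-(s_{k,1}+s_{k-1,1})D-d's_{k,1}$ together with the product compatibility (\ref{decomp}).

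With this in hand the main computation is the expansion of $d'\chi+\chi d$. I would decompose $d$ into its ground part $d_A$, its bar part $d_1+d_2$, and the Hochschild twisting terms $\theta^1,\theta^2$, and likewise for $d'$, then regroup the resulting terms according to which of the five summands of $\chi$ and which piece of the differential is involved. Three families appear. Terms in which $d'$ meets the factor $fE(\cdots;v)$ or $fE(\cdots;b_n)$ combine with the $\chi d$-terms in which $d$ acts internally, through $D$, on the enclosed $a_i$'s, and collapse by the defining relation for $s$ to the replacement of $fE$ by the corresponding $E'$-term evaluated on $f$-images. Terms in which $d'$ meets a bar factor combine, via the coderivation identity of the previous paragraph, to the replacement of $Bf\circ\mu_E$ by $\mu_{E'}\circ(Bf\otimes Bf)$. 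The remaining cross-terms, together with the action of $\theta^1,\theta^2$ and of the bar multiplication $d_2$, are absorbed using the hga product axiom (\ref{prod}) and the compatibility (\ref{decomp}) of $s$ with products, which convert the merging of adjacent bar entries and the feeding of the ground element $u$ into the first or last entry into matching sums. Summing the three families telescopes to $\Lambda f\circ\lambda_E-\lambda_{E'}\circ(\Lambda f\otimes\Lambda f)$.

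The hard part will be the sign bookkeeping: reconciling the exponents $\nu_1,\nu_2$ occurring in $\chi$ with $\varepsilon_1,\varepsilon_2$ of (\ref{formula}), and with the Koszul signs generated both by commuting $d'$ past the tensor factors and by the coderivation rule for $\mathbf{s}$. A second genuine difficulty lies at the \emph{boundary slots}: in summands three–five the ground element $u$ is inserted cyclically, inside $E(a_{k+1},\dots,a_m,u,a_1,\dots,a_i;b_n)$, so the twisting terms $\theta^1$ (prepending $ua_1$) and $\theta^2$ (appending $a_mu$) interact with these wrap-around arguments; there one must apply (\ref{prod}) and (\ref{decomp}) with care to confirm that the placement of $u$ and the splitting of $b_n$ are consistent on both sides and that the hga identities (\ref{dif}), (\ref{associativity}) account for the residual nested-operation terms. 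Once the signs are pinned down and these boundary interactions are matched, the remaining cancellations are the routine telescoping described above.
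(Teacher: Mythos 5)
Your proposal is correct and follows essentially the same route as the paper: the paper's own proof is the single sentence that the verification is straightforward using equality (\ref{decomp}), and the remark following the lemma describes exactly the structure you identify, namely that $\chi$ is the derivation-type extension of $\lambda_E$ obtained by replacing one factor at a time ($E$ by $s$ in a ground slot, $Bf\circ\mu_E$ by $\mathbf{s}$ in the bar slot) so that the telescoping yields $\Lambda f\circ\lambda_E-\lambda_{E'}\circ(\Lambda f\otimes\Lambda f)$. Your plan in fact supplies more detail than the paper does, in particular the coderivation argument for the bar-level homotopy and the flagging of the wrap-around terms coming from $\theta^1,\theta^2$.
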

\begin{proof}
The proof is straightforward using  equality (\ref{decomp}). %\smartqed %\qed
\end{proof}
\begin{remark}
This lemma emphasizes a role of the explicit formula for the product $\lambda_E$ in the
following way: Since it has
 a general form $\sum (A_1\cdot
A_2\otimes A_3),$ the chain homotopy  $\chi$ admits to be of  the form $\chi=A_1\cdot
A_2\otimes s_3+A_1\cdot s_2\otimes A_3'+s_1\cdot A_2'\otimes A_3',$ i.e. a standard
derivation extension  of maps
 $s_i$  which are thought to be  related with the factors by
$A_i-A^{\prime}_i=ds_i+s_id.$

\end{remark}

Using  Lemma 1 we have the following comparison proposition.

\begin{proposition}\label{comparison}
Let $f:A\rightarrow A'$ be as in Lemma \ref{ederivation}. Then $f$ induces an algebra
map
\[HH(f): HH(A)\rightarrow HH(A')\]
and  when $H(f):H(A)\rightarrow H(A') $ is an isomorphism, so is $HH(f).$
\end{proposition}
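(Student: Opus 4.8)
The plan is to deduce Proposition \ref{comparison} almost entirely from Lemma \ref{ederivation}, since that lemma produces the chain homotopy which is the heart of the matter. First I would observe that the existence of the homotopy $\chi$ asserted in Lemma \ref{ederivation} directly says that $\Lambda f\circ\lambda_E$ and $\lambda_{E'}\circ(\Lambda f\otimes\Lambda f)$ are chain homotopic maps $\Lambda A\otimes\Lambda A\to\Lambda A'$. Passing to homology kills the homotopy term, so on $HH_*$ we get the equality
\begin{equation*}
HH(f)\circ[\lambda_E]=[\lambda_{E'}]\circ\bigl(HH(f)\otimes HH(f)\bigr),
\end{equation*}
where $[\lambda_E]$, $[\lambda_{E'}]$ denote the induced products on Hochschild homology. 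This is precisely the statement that $HH(f)$ is an algebra map, so the first assertion follows.

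For the induced map itself I would note that $\Lambda f=f\otimes Bf$ is a chain map of the Hochschild complexes: because $f$ is a dga map it commutes with $d_A$, $d_{A'}$ and with the bar differential (both $d_1$ and $d_2$ pieces, the latter using multiplicativity of $f$), and because $f$ is a dga map it also intertwines the twisting components $\theta^1,\theta^2$ built from $\mu$ and $\tau^*$. Hence $\Lambda f$ descends to $HH(f):HH(A)\to HH(A')$ on homology. The technical care here is that $f$ is \emph{not} assumed to commute with the higher operations $E_{p,q}$; what saves the argument is that associativity/multiplicativity of $f$ is all that is needed for $\Lambda f$ to be a chain map, while the failure of $f$ to be an hga morphism is exactly the defect measured by the maps $s_{k,1}$ and absorbed into $\chi$.

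For the second assertion I would invoke a standard comparison argument on the underlying filtered or tensor structure. The complex $\Lambda A=A\otimes BA$ carries the natural (bar-length) filtration, and $\Lambda f$ respects it; on the associated graded the map is $f\otimes f^{\otimes\bullet}$ up to the standard sign conventions, so the induced map on $E_1$-terms is determined by $H(f)$. If $H(f)$ is an isomorphism then, since $A,A'$ are $\Bbbk$-free, $f^{\otimes n}$ induces an isomorphism on homology of each tensor power by the Künneth theorem, hence $\Lambda f$ is a filtered quasi-isomorphism on associated graded; a spectral sequence comparison (or equivalently the Eilenberg--Moore/five-lemma argument level by level) then yields that $HH(f)$ is an isomorphism.

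The main obstacle will be verifying cleanly that the Hochschild differentials on $A$ and $A'$ are intertwined by $\Lambda f$ despite $f$ not respecting the $E_{p,q}$; concretely one must check that the $\theta^1,\theta^2$ components, which involve only the strict product $\mu$ and the projection $\tau^*$ (not the higher operations), commute with $f\otimes Bf$. This is routine once one writes $\theta^1,\theta^2$ explicitly as in Subsection 2.2, but it is the step where the hypotheses of Lemma \ref{ederivation} must be used carefully to keep the higher-homotopy defect confined to $\chi$ and out of the chain-map verification. The remaining spectral-sequence convergence is standard for $1$-reduced, $\Bbbk$-free inputs and I would treat it as such.
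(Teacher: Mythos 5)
Your proposal is correct and follows exactly the route the paper intends: the paper gives no separate proof of Proposition \ref{comparison}, deriving it directly from Lemma \ref{ederivation} (the chain homotopy $\chi$ yields multiplicativity of $HH(f)$ on homology), with the chain-map property of $\Lambda f=f\otimes Bf$ and the bar-length filtration/spectral-sequence comparison for the isomorphism statement left as standard. Your added care about the $\theta^1,\theta^2$ components depending only on the strict dga structure, and about convergence for $1$-reduced $\Bbbk$-free inputs, fills in precisely what the paper leaves implicit.
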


Now let fix on $H$ the trivial hga structure, i.e. $\{E_{p,q}\}=\{E_{0,1},E_{1,0}\},$
while on $C^{\ast}(Y;\Bbbk)$ the canonical hga structure $\{E_{p,q}\}$  mentioned in
the previous section. Then we construct an auxiliary hga $(RH,d)$ with dg  algebra maps
$$H\overset{\rho}\longleftarrow RH\overset{f}
\longrightarrow C^{\ast}(Y;\Bbbk)$$ such that both maps satisfy the hypotheses of Lemma
1 and are cohomology isomorphisms. Then we can apply Proposition \ref{comparison} to
obtain algebra isomorphisms
 $$HH_*(H )
 \overset{_{HH(\rho)}}\longleftarrow HH_*(RH)
 \overset{_{HH(f)}}\longrightarrow HH_*(C^{\ast}(Y;\Bbbk)).$$
  Since the product
$\lambda_E$  on $\Lambda(H,0,\cdot,\{E_{0,1},E_{1,0}\})$ coincides with the standard
shuffle product,
 one gets  algebra isomorphisms \cite{Loday1}
$$HH_*(H,0,\cdot,\{E_{0,1},E_{1,0}\})\approx S(U)\otimes \Lambda( s^{_{-\!1}}\! U)
\approx H(Y;\Bbbk)\otimes H(\Omega Y;\Bbbk);$$  on the other hand,
 from the previous section  we have an
algebra isomorphism $$HH_*(C^{\ast}(Y;\Bbbk),d,\cdot \,,\{E_{p,q}\})\approx H^*(\Lambda
Y;\Bbbk).$$ Consequently, Theorem \ref{diff.forms} follows.

 Thus it remains to define the  hga $RH$ and maps $\rho,f$ mentioned
above: Indeed, consider a bigraded multiplicative resolution  $\rho :(RH,d)\rightarrow
H$ of $H$ (\cite{sane}, \cite{sane2}) such that $R^*H^*=T(V^{*,*})$ with
$V^{*,*}=V^{0,\ast}\oplus \mathcal{E}^{<0,*}\oplus \mathcal{T}^{-2r,*},r\geq 1,$\,
$V^{0,*}\approx U^*,$
 $\mathcal{E}^{-n,*}=
 \{\mathcal{E}_{k,1}^{-n,*} \}_{1\leq k\leq n}$ with
$\mathcal{E}_{k,1}^{-n,*}$ spanned on the set of
 expressions
$E_{k,1}(a_1,...,a_k;b),$ $  a_r\in R^{-i_r}H^*,$ $b\in V^{-j,*},$  $n=\sum_{r=1}^{k}
i_r+ j, $
 unless  $E_{1,1}(a;a),\,a\in V^{0,\ast},$ and
subjected   to relations  (\ref{associativity}),
 while $\mathcal{T}^{-2(n-1),*},\, n\geq 2,$ is spanned on
the set of expressions
  $a_1{\smallsmile}\!\!\!_{_{2}}\,a_2 {\smallsmile}\!\!\!_{_{2}}
  \cdots {\smallsmile}\!\!\!_{_{2}} \,a_n$ with $ a_i\in V^{0,*},\,
 a_i{\smallsmile}\!\!\!_{_{2}}\,a_j =
 a_j{\smallsmile}\!\!\!_{_{2}}\,a_i,$
   and
 $a_i\neq a_j$ for $i\neq j;$    the differential $d$
 is defined: On $V^{0,*}$  by $dV^{0,*}=0;$ on
$\mathcal{E}$ by formula (\ref{dif}), and  on $\mathcal{T}$  by
\begin{equation}\label{cup2q}
d (a_1{\smallsmile}\!\!\!_{_{2}}\cdots{\smallsmile}\!\!\!_{_{2}}a_n) =
\sum_{(\mathbf{i};\mathbf{j})}\,
(a_{i_1}{\smallsmile}\!\!\!_{_{2}}\cdots{\smallsmile}\!\!\!_{_{2}}\,a_{i_k})
\,{\smallsmile}\!\!\!_{_{1}}\,
(a_{j_1}{\smallsmile}\!\!\!_{_{2}}\cdots{\smallsmile}\!\!\!_{_{2}}\,a_{j_{\ell}})
\end{equation}
where the summation is over unshuffles $(\mathbf{i};\mathbf{j})=(i_1<\cdots <i_k\,
;j_1<\cdots<j_{\ell} )$ of $\underline{n}$ and ${\smallsmile}\!\!\!_{_{1}}$ denotes
$E_{1,1}.$
 In particular, $dE_{1,1}(a;b)=dE_{1,1}(b;a)=ab-ba$ and
$d(a{\smallsmile}\!\!\!_{_{2}}\,b)  =E_{1,1}(a;b)+E_{1,1}(b;a)$
 for $a,b\in V^{0,*}.$ It is
straightforward to check that $H(R^{<0}H^*,d)=0$ (or see the argument for an analogous
resolution, denoted by $R_{\delta}H$, in \cite{sane2}). Set $E_{1,1}(a; a)=0$ for $a\in
V^{0,\ast}$ and extend the operations $E_{k,1}(a_1,..,a_k; b),\, k\geq 1,$ by formula
(\ref{prod}) on $RH$ for any $b\in RH.$ Thus, $\rho$ becomes   an hga map too.
Consequently, $HH(\rho)$ is an isomorphism by Proposition \ref{comparison}.

Since $Sq_1 [z]=0,$ denote by $\gamma(z;z)\in C^{2n-2}(Y;\Bbbk)$
  a cochain such that
\begin{equation}\label{boundary}
d\gamma(z;z)=z\smile_1 z.
\end{equation}

Next define a dga map $f:(RH,d)\rightarrow C^{\ast}(Y;\Bbbk)$ as follows. First define
it on $V$ and then extend multiplicatively. On $V^{0,\ast}:$ by choosing cocycles
$f^0:U^*\rightarrow  C^*(Y;\Bbbk); $ on $\mathcal{T}$: by
 $f(a_1{\smallsmile}\!\!\!_{_{2}}\,a_2)=f^0a_1\smile_2 f^0a_2,$ where
  $\smile_2 $ denotes   Steenrod's cochain operation, and extend inductively
  for $a_1{\smallsmile}\!\!\!_{_{2}}\cdots {\smallsmile}\!\!\!_{_{2}}
   \,a_n,\ n\geq 3;$ such an extension has no obstructions, since a cocycle
   in $C^{*}(Y)$
   written by cochain operations
  in distinct variables is cohomologous to zero;
 on $\mathcal{E}:$
 for   $E_{k,1}(a_1,\!...,a_k;b),k\geq 1,$ with $a_i\in RH$ and  $b\in
V^{0,\ast}$ set
$$
fE_{k,1}(a_1,\!...,a_k;b)\!=\!\left\{
\begin{array}{llll}

E_{2,1}(fa_1,fa_2;fb)\\
-fa_1\!\cdot\! \gamma(fa_2;fb)-\gamma(fa_1;fb)\!\cdot\! fa_2,&&
                                               k=2
                                                \vspace{1mm}\\

E_{k,1}(fa_1, \!...,fa_k;fb), & &   k\neq 2;

\end{array}
\right.
$$
for $b=E_{\ell,1}(b_1,\!...,b_{\ell};c)$ and $ b'=E_{\ell-1,1}(b_1,\!...,b_{\ell-1};c)
,\, b''=E_{\ell-1,1}(b_2,\!...,b_{\ell};c) $  set
$$
fE_{k,1}(a_1,\!...,a_k;b)=\left\{
\begin{array}{llll}
E_{k,1}(fa_1, \!...,fa_k;fb)\\
+(-1)^{k}
 E_{k-1,1}(fa_1,\!...,fa_{k-1};fb_1)\!\cdot\!  \gamma(fa_k;fc)\\
 +(-1)^{k}
E_{k-1,1}(fa_1, \!...,fa_{k-1};fc)\!\cdot\!  \gamma(fa_k;fb_{1})\\ -

\gamma(fa_1;fb_1)\!\cdot\!  E_{k-1,1}(fa_2, \!...,fa_{k};fc)\\

-\gamma(fa_1;fc)\!\cdot\!  E_{k-1,1}(fa_2, \!...,fa_{k};fb_1),

& & \ell=1

 \vspace{0.1in}\\

E_{k,1}(fa_1, \!...,fa_k;fb) \\
+(-1)^{k} E_{k-1,1}(fa_1, \!...,fa_{k-1}; fb')\!\cdot\!
\gamma(fa_k;fb_{\ell})\\
- \gamma(fa_1;fb_{1})\!\cdot\!  fE_{k-1,1}(a_2, \!...,a_{k}; b''),
 & & \ell\geq 2,

\end{array}
\right.
$$
where we assume $\gamma(a;b)=0$ unless  $a= b$ with $a\in V ^{0,\ast}$ in which case
$\gamma$ is defined by (\ref{boundary});
 and, finally, for $b\in \mathcal{T},$
set
$$fE_{k,1}(a_1,...,a_k;b)=
  E_{k,1}(fa_1,...,fa_k;fb).$$

Define  maps $s_{k,1}: RH^{\otimes k+1}\rightarrow C^*(Y;\Bbbk),\,k\geq 1, $ of degree
$-1$ first for $a_1\otimes \cdots \otimes a_k \otimes b\in RH^{\otimes k}\otimes V$ by
$$
 s_{k,1}(a_1\otimes\cdots \otimes a_k\otimes b)=\left\{
 \begin{array}{lllll}

\gamma(b;b), && k=1, \, a_1=b \in V^{0,\ast}    \\

   0, & & \text{otherwise}

\end{array}
\right.
$$
and  then extend them on whole $RH^{\otimes k+1}$ by formula (\ref{decomp}). It is
immediate to verify that  $f$ and $s=\{s_{k,1}\}_{k\geq 1}$ satisfy the hypotheses of
Proposition \ref{comparison} and, consequently, $HH(f)$ is an isomorphism. \qed
%\smartqed

\begin{example}
Let $A=\Bbbk[x,y] ,$  $|x|=|y|=2,\,$   $B=T(\bar x,\bar y,z)/\{\bar{x}^2,\bar{y}^2 \},$
$|\bar x|=|\bar y|=|z|=1,$
 $ dz=\bar x\bar
y+\bar y\bar x.$ Take $(A,0)\otimes (B,d)$ and set $h(z)=x$ to  obtain the dga
$C=(A\otimes B,d^{\otimes}+h).$ It is easy to see that the spectral sequence of $C$ is
collapsed, and, consequently, its $E_{\infty}$-term is isomorphic as algebras with the
$E_{\infty}$-term of the Serre spectral sequence of the free loop fibration with the
base $Y=\Bbb{C}P^{\infty}\times \Bbb{C}P^{\infty}.$
 However,  $H^*(C)$ is isomorphic only additively with
$H^*(\Lambda Y).$

\end{example}

\vspace{0.3in}

%\vspace{-0.5mm}

\end{document}